\documentclass[11pt, reqno]{amsart}

\usepackage{pstricks}
\usepackage{times}

\usepackage[numbers, sort&compress]{natbib}

\usepackage[breaklinks]{hyperref}

\usepackage{amsmath,amssymb,amsthm,mathtools}
\usepackage{hyperref}
\usepackage{enumitem}

\hypersetup{colorlinks=true, linkcolor=blue, citecolor=blue, urlcolor=blue}

\newtheorem{theorem}{Theorem}
\newtheorem{proposition}{Proposition}
\newtheorem{lemma}{Lemma}
\newtheorem{corollary}{Corollary}
\theoremstyle{remark}

\newcommand{\R}{\mathbb{R}}
\newcommand{\N}{\mathbb{N}}
\newcommand{\Z}{\mathbb{Z}}
\newcommand{\E}{\mathbb{E}}

\newcommand{\abs}[1]{\left\lvert #1 \right\rvert}
\newcommand{\norm}[1]{\left\lVert #1 \right\rVert}
\newcommand{\what}[1]{\widehat{#1}}

\newcommand{\Var}{\mathrm{Var}}

\usepackage[utf8]{inputenc}
\usepackage{amsmath}
\usepackage{amsfonts}
\usepackage{bbm}
\usepackage{mathtools}
\usepackage{tikz}
\usetikzlibrary{decorations.pathreplacing}
\usepackage{amsthm}
\usepackage[english]{babel}
\usepackage{fancyhdr}
\usepackage{amssymb}
\usepackage{enumitem}
\usepackage{qtree}
\usepackage{mathrsfs}

\begin{document}
\title[Orthogonality of Mat\'ern fields on bounded domains in $\mathbb{R}^4$]{Microergodicity implies orthogonality of Mat\'ern fields on bounded domains in $\mathbb{R}^4$}
\author{Natesh S. Pillai}

\address{Department of Statistics, Harvard University}
\email{pillai@fas.harvard.edu}
\keywords{Mat\'ern class, Gaussian random fields, infill asymptotics, spectral smoothing, localization}
\subjclass[2020]{Primary 60G60; Secondary 62M30, 62M40}

\begin{abstract}
Mat\'ern random fields are one of the most widely used classes of models in spatial statistics.
The fixed-domain identifiability of covariance parameters for stationary
Mat\'ern Gaussian random fields exhibits a dimension-dependent phase transition.
For known smoothness $\nu$, Zhang \cite{Zhang2004} showed that when $d\le3$,
two Mat\'ern models with the same microergodic parameter
$m=\sigma^2\alpha^{2\nu}$ induce equivalent Gaussian measures on bounded
domains, while Anderes \cite{Anderes2010} proved that when $d>4$, the
corresponding measures are mutually singular whenever the parameters differ.
The critical case $d=4$ for stationary Mat\'ern models has remained open.

We resolve this case. Let $d=4$ and consider two stationary Mat\'ern models on
$\mathbb R^4$ with parameters $(\sigma_1,\alpha_1)$ and $(\sigma_2,\alpha_2)$
satisfying
\[
\sigma_1^2\alpha_1^{2\nu}=\sigma_2^2\alpha_2^{2\nu},
\qquad
\alpha_1\neq \alpha_2.
\]
We prove that the corresponding Gaussian measures on any bounded observation
domain are mutually singular on every countable dense observation set, and
on the associated path space of continuous functions. 

Our approach can be viewed as a spectral analogue of the higher-order increment
method of Anderes \cite{Anderes2010}. Whereas Anderes isolates the second
irregular covariance coefficient through renormalized quadratic variations in
physical space, we detect the first nonvanishing high-frequency spectral
mismatch via localized Fourier coefficients and use a normalized Whittle score
to identify parameters. More broadly, the localized
spectral probing framework used here for detecting subtle covariance differences in
Gaussian random fields may be useful for studying identifiability and
estimation in other spatial models.

\end{abstract}
\maketitle

\section{Introduction}

Gaussian random fields are a central tool for modeling spatial dependence
\cite{stein1999,banerjee2003}.
Let
\[
Y=\{Y(t):t\in\R^d\}
\]
be a mean-zero stationary Gaussian field with covariance
\[
\mathrm{Cov}(Y(s),Y(t)) = K(t-s).
\]
Many spatial models further assume isotropy, meaning that
there exists a function
$\varphi:[0,\infty)\to\R$ such that
\[
K(h) = \varphi(|h|), \qquad h\in\R^d .
\]
Among stationary isotropic covariance families, the Mat\'ern class occupies a distinguished position.

For variance parameter $\sigma^2>0$, range parameter $\alpha>0$, and
smoothness parameter $\nu>0$, the Mat\'ern covariance function is \cite{stein1999}
\begin{equation}\label{eq:intro-matern}
K(h)=
\sigma^2
\frac{(\alpha |h|)^\nu}{2^{\nu-1}\Gamma(\nu)}
\mathcal{K}_\nu(\alpha |h|),
\qquad h\in\R^d,
\end{equation}
where $\mathcal{K}_\nu$ is the modified Bessel function of the second kind,
and $K(0)=\sigma^2$ by continuity.
The parameter $\sigma^2$ sets the marginal variance, $\alpha$ governs the
spatial scale of dependence, and $\nu$ controls regularity.
Larger values of $\nu$ correspond to smoother fields; in particular, the
Mat\'ern model provides a continuous hierarchy ranging from rough fields
(for example the exponential model when $\nu=\tfrac12$) to very smooth
ones, and after suitable rescaling it approaches the squared exponential
kernel as $\nu\to\infty$. The survey \cite{porcu2024matern} gives a  panoramic overview of the multifaceted use of the Mat\'ern family in statistical
applications and  beyond. 

For stationary fields it is often more convenient to work in the spectral
domain.
By Bochner's theorem,
\[
K(h)=\frac{1}{(2\pi)^d}\int_{\R^d} e^{i\xi\cdot h}f(\xi)\,d\xi,
\]
where $f$ is the spectral density.
For the Mat\'ern covariance \eqref{eq:intro-matern}, the spectral density is \cite[p. 49]{stein1999}
\begin{equation}\label{eq:intro-spectral}
f_{\sigma,\alpha,\nu}(\xi)
=
C_{d,\nu}\sigma^2\alpha^{2\nu}
(\alpha^2+|\xi|^2)^{-(\nu+d/2)},
\qquad \xi\in\R^d,
\end{equation}
for a constant $C_{d,\nu}>0$ depending only on $d$ and $\nu$.
Hence, as $|\xi|\to\infty$,
\begin{equation}\label{eqn:highfreq}
f(\xi)\sim C\sigma^2\alpha^{2\nu}|\xi|^{-2\nu-d},
\qquad |\xi|\to\infty.
\end{equation}
This high-frequency tail is fundamental for fixed-domain asymptotics.
It shows, first, that $\nu$ determines the spectral decay and therefore the
local regularity of the field.
Second, it identifies the combination
\begin{equation} \label{eqn:microerg}
m = m(\sigma,\alpha):=\sigma^2\alpha^{2\nu}
\end{equation}
as the leading coefficient in the spectral tail.
In the parameterization used in this paper, this is the
\emph{microergodic parameter} \cite{stein1999}.

\subsection{Fixed-domain asymptotics and microergodicity}

In spatial statistics one often observes a random field on a fixed bounded
domain while the sampling design becomes increasingly dense.
This is the \emph{fixed-domain}, or \emph{infill}, asymptotic regime.
Since the data arise from a single realization of a Gaussian random field, an important issue for statistical inference is not merely whether different parameter values induce different Gaussian measures, but whether the corresponding measures are equivalent or mutually singular on the observation sigma-field. Under fixed-domain asymptotics, this distinction governs which covariance features are consistently estimable from dense observations of a single path.

The present paper studies this issue for stationary Mat\'ern fields on
Euclidean domains and asks which covariance parameters are identifiable,
and therefore estimable, in the fixed-domain asymptotic
regime. For the Mat\'ern family with known smoothness $\nu$, the leading spectral
asymptotics already suggest what should happen.
Two models with the same value of
\[
m = m(\sigma,\alpha) :=\sigma^2\alpha^{2\nu}
\]
have the same high-frequency tail to first order.
This makes $m(\sigma,\alpha)$ the natural estimable combination under infill
asymptotics, while the separate identifiability of $\sigma^2$ and $\alpha$
depends on whether the next-order information in the covariance or the
spectrum can also be recovered from dense data.

A seminal result of \citet{Zhang2004} shows that when $d\le 3$, two
stationary Mat\'ern fields with the same smoothness $\nu$ and the same
microergodic parameter $m$ induce \emph{equivalent} Gaussian measures on every bounded
domain.
In these dimensions the variance and range parameters cannot be separated
consistently from a single dense realization.
At the opposite end, \citet{Anderes2010} proved that when $d>4$, the
corresponding Mat\'ern measures are orthogonal whenever the parameters
differ.
Equivalently, variance and range can be consistently separated in that
regime. For stationary Mat\'ern fields on $\R^d$, the $d=4$ case has remained open until now.

Dimension four turns out to be the critical case. When two Mat\'ern models
have the same microergodic parameter $m$ and smoothness $\nu$ but different
range parameters $\alpha$, the discrepancy between their covariance
structures lies exactly at the boundary between summable and non-summable
second–order effects across frequencies. In lower dimensions the cumulative
discrepancy remains bounded, while in higher dimensions it grows at a
polynomial rate. In dimension four the accumulation is much slower: the
distinguishing signal grows only logarithmically as higher frequencies are
incorporated. As we show below, this slow but unbounded accumulation is
precisely what makes the four–dimensional case identifiable.
Recent work on related Gaussian field models has continued to identify
dimension four as the delicate boundary case in the Euclidean setting
\citep{porcu2024matern, BoliKirc2023, li2023inference}.

\subsection{Our contribution}

We settle the remaining Euclidean case for $d = 4$.
We prove in Theorem \ref{thm:main} that in dimension $d=4$, two stationary Mat\'ern fields with the
same smoothness parameter and the same microergodic parameter, but with
different range parameters, induce mutually singular Gaussian measures on
every countable dense observation set in a bounded domain.
We prove this by first showing that the associated laws on path space are mutually singular and then projecting to dense sets.

More precisely, let $Y_1$ and $Y_2$ be stationary Mat\'ern fields on $\R^4$
with parameters $(\sigma_1,\alpha_1)$ and $(\sigma_2,\alpha_2)$ and common
smoothness $\nu>0$.
Assume
\begin{equation} \label{eqn:merg}
\sigma_1^2\alpha_1^{2\nu}=\sigma_2^2\alpha_2^{2\nu},
\qquad
\alpha_1\neq\alpha_2.
\end{equation}
Then Theorem~\ref{thm:main} shows that the induced Gaussian measures on any
countable dense observation set are mutually singular.
Thus, unlike the case $d\le 3$, the range parameter is asymptotically
distinguishable from dense fixed-domain data for $d = 4$.

\subsection{Idea of the proof}
Our key idea is to probe the field using localized Fourier coefficients, revealing a small variance mismatch at high frequencies whose accumulation produces a logarithmic signal in dimension four.
Our argument works for any $d \geq 4$ and thus also recovers the orthogonality results for $d > 4$ from \citet{Anderes2010}.

Let $D \subset \R^4$ be a bounded domain and suppose we have two Mat\'ern laws $\mathbf{P}_1$ and $\mathbf{P}_2$ satisfying \eqref{eqn:merg} on $D$. 
Our proof resolves the critical case $d=4$ by working in the spectral domain.
 Writing $p:=\nu+2$, the Mat\'ern spectral density in dimension four is
\[
f_{\sigma,\alpha,\nu}(\xi)
=
C_\nu\,\sigma^2\alpha^{2\nu}(\alpha^2+|\xi|^2)^{-p}.
\]
Under the microergodic matching condition \eqref{eqn:merg}
\[
\sigma_1^2\alpha_1^{2\nu}=\sigma_2^2\alpha_2^{2\nu},
\qquad
\alpha_1\neq\alpha_2,
\]
the two spectral densities agree at leading order,
\[
f_j(\xi)\sim C_\nu m\,|\xi|^{-2p},
\qquad |\xi|\to\infty,
\]
with $m=\sigma_j^2\alpha_j^{2\nu}$.
The first discrepancy appears at the next order,
\[
f_2(\xi)-f_1(\xi)\asymp |\xi|^{-2p-2},
\qquad
\frac{f_2(\xi)-f_1(\xi)}{f_1(\xi)}\asymp |\xi|^{-2}.
\]
In dimension four this decay is critical, since
\[
\sum_{|k|\le N}|k|^{-4}\asymp \log N .
\]
Thus the mismatch accumulates only logarithmically across frequencies, and
this marginal divergence is the signal exploited in our construction.

To access this signal, we ``probe'' the field by constructing localized Fourier coefficients. Fix a smooth cutoff $\chi\in C_c^\infty(D)$\footnote{
For example, if $\overline{B(0,R)}\subset D$, one may take
\[
\chi(t)=
\begin{cases}
\exp\!\left(-\dfrac{1}{1-\lvert t\rvert^2/R^2}\right), & \lvert t\rvert<R,\\[1ex]
0, & \lvert t\rvert\ge R.
\end{cases}
\]
} and define 
\begin{equation}\label{eq:Xk-def}
X_k := \int_D \chi(t)e^{-ik\cdot t}Y(t)\,dt,
\qquad k\in\N^4.
\end{equation}
Under the $j$th model, $X_k$ is a centered complex Gaussian random variable
with variance
\[
v_j(k)=\E_j|X_k|^2
=
(2\pi)^{-4}(f_j*|\widehat{\chi}|^2)(k).
\]
Thus $v_j(k)$ is a \emph{local smoothing} of $f_j$ over a frequency window of scale
$(\mathrm{diam}(\mathrm{supp}\,\chi))^{-1}$.

Define the relative variance mismatch
\begin{equation}\label{eqn:dk}
\delta_k=\frac{v_2(k)}{v_1(k)}-1.
\end{equation}
A direct expansion (Lemma~\ref{lem:diagonal-asymptotics}) shows that
\[
\delta_k \sim p(\alpha_1^2-\alpha_2^2)|k|^{-2}.
\]
Consequently, for the shell
\begin{equation} \label{eqn:Lambda}
\Lambda_N:=\{k\in\mathbb N^4:K_0\le \|k\|_\infty\le N\},
\qquad
L_N:=\sum_{k\in\Lambda_N}\delta_k^2,
\end{equation}
we have
\[
L_N\asymp \log N.
\]
The quantity $L_N$ turns out to be the natural information scale of the problem.

To separate models using the above observation, we consider the normalized quadratic statistic
\begin{equation}\label{eqn:quadstat}
T_N
=
\frac{1}{L_N}
\sum_{k\in\Lambda_N}
\delta_k
\left(
\frac{|X_k|^2}{v_1(k)}-1
\right).
\end{equation}
Each term is centered under $\mathbf P_1$, while under $\mathbf P_2$ it has mean $\delta_k$, so that
\[
\E_1[T_N]=0,
\qquad
\E_2[T_N]=1.
\]
Thus $T_N$ aggregates a sequence of weak frequency-wise signals into an
order-one separation. In Section~\ref{sec:comparison} we show that the statistic $T_N$ can be interpreted as a natural score-type statistic associated with the difference between the two localized spectral laws. 

A key point is that the variables $\{X_k\}$ are not independent. However, since the Fourier transform $\widehat{\chi}$ decays rapidly, localization yields that the covariances
\[
\mathbb{E}_j[X_k\overline{X_\ell}],
\qquad
\mathbb{E}_j[X_k X_\ell]
\]
decay rapidly in $|k-\ell|$ and $|k+\ell|$, respectively, for $j=1,2$. This off-diagonal decay enables us to control the variance of $T_N$ and show that: 
\[
\mathrm{Var}_j(T_N)\lesssim L_N^{-1}.
\]
Since $L_N\asymp\log N$, the variance vanishes along a sparse subsequence $\{N_s\}$,
and we obtain almost sure separation
\[
T_{N_s}\to0\quad \mathbf P_1\text{-a.s.},\qquad
T_{N_s}\to1\quad \mathbf P_2\text{-a.s.}
\]
This yields a separating event and hence mutual singularity. 

This viewpoint may be interpreted as a spectral analogue of the increment
method of \citet{Anderes2010}; see Section~\ref{sec:comparison} for further
details. In \citet{Anderes2010}, the second irregular covariance coefficient
is isolated in physical space, whereas here we isolate the first nonvanishing
spectral mismatch at high frequency. The critical dimension $d=4$ appears
because the information scale $L_N$ diverges only logarithmically.
More broadly, this spectral probing perspective provides a systematic way to analyze equivalence and orthogonality of 
Gaussian measures via localized frequency information. It also points toward a new class of inference procedures, in which parameters are identified by aggregating weak high-frequency discrepancies, in the spirit of localized Whittle estimation.

Our construction is related to, but not identical with, tapering. Classical lag tapering \cite{zhang2008covariance} \emph{replaces} the stationary covariance kernel \(K(h)\) by
\[
K_{\mathrm{tap}}(h)=K(h){\omega}_\chi(h), \qquad {\omega}_\chi(h):=\int_{\mathbb R^4}\chi(u+h)\chi(u)\,du,
\]
which corresponds to replacing the spectral density \(f\) by \((2\pi)^{-4}(f*|\widehat{\chi}|^2)\); we do not change the covariance kernel. 
Thus, although at each frequency we recover the same smoothed spectral quantity as in tapering, the off-diagonal structure induced by our spectral localization does not coincide with that of any tapered stationary covariance model.

\subsection{Related Work}

A recent paper of Bolin and Kirchner \cite{BoliKirc2023} studies equivalence of
Gaussian measures for generalized Whittle--Mat\'ern fields on a bounded domain
$D\subset \R^d$, defined through fractional elliptic operators with homogeneous
Dirichlet boundary conditions. In the classical constant-coefficient case, their
model is
\[
\mu = N(m,\tau^{-2}L^{-2\beta}),
\qquad
L=-\Delta+\kappa^2,
\]
viewed as a Gaussian measure on $L^2(D)$. Here $\beta$ plays the role of the
smoothness index, with the usual whole-space correspondence
\[
\nu = 2\beta-\frac d2,
\]
and $\kappa$ plays the role of the range parameter. In the constant-coefficient
case, the scaling parameter $\tau^{-2}$ is proportional, up to a constant
depending only on $(d,\nu)$, to the Mat\'ern microergodic combination
$\sigma^2\kappa^{2\nu}$. Thus their parameter $\tau$ is the bounded-domain
analogue of the leading spectral scale in \eqref{eqn:highfreq}.

Their Corollary~3.3 gives a complete equivalence classification for the classical
bounded-domain Whittle--Mat\'ern model. In dimensions $d\le 3$, equivalence
holds if and only if
\[
\beta=\tilde\beta,
\qquad
\tau=\tilde\tau,
\]
together with the corresponding Cameron--Martin condition on the means. In
dimensions $d\ge 4$, equivalence additionally requires
$
\kappa^2=\tilde\kappa^2.
$
Therefore, in the bounded-domain classical Whittle--Mat\'ern model, matching the
leading scale is no longer sufficient in dimension $4$: once $\beta$ and $\tau$
are matched, any nontrivial change in $\kappa$ forces non-equivalence. This mirrors our result for 
the stationary Euclidean model obtained in this paper.

We study the restriction to a bounded observation domain of a stationary
Mat\'ern field defined on all of $\R^4$. Thus translation invariance holds in
the ambient Euclidean space and is broken only by the restriction of the
observations to $D$. By contrast, the Whittle--Mat\'ern field in
\cite{BoliKirc2023} is constructed intrinsically on $D$ through the Dirichlet
operator $L=-\Delta+\kappa^2$, so the boundary conditions are part of the model
itself and the resulting field is generally non-stationary. Moreover, on a
bounded domain the classical stationary Mat\'ern model does not admit a fixed
fractional-operator representation since its eigenfunctions depend on the smoothness
parameter. Thus the operator-based framework of \cite{BoliKirc2023} does not
directly apply. In the limit as the domain expands to $\R^d$, the model converges to the stationary Mat\'ern field, 
and the same dimension-dependent equivalence behavior is expected.

At the level of asymptotics, however, the mechanisms behind the two proofs are
strikingly similar. In the constant-coefficient bounded-domain model, the
Dirichlet eigenfunctions \emph{diagonalize} both covariance operators. If
$\hat\lambda_j$ denotes the $j$th eigenvalue of the Dirichlet Laplacian on $D$,
then
\[
\lambda_j=\hat\lambda_j+\kappa^2,
\qquad
\tilde\lambda_j=\hat\lambda_j+\tilde\kappa^2,
\]
and the Feldman--H\'ajek criterion \cite[Theorem 2.5]{DPZ2014} reduces to the square-summability of the
diagonal ratios
\[
c_j-1,
\qquad
c_j=
\Bigl(\frac{\tilde\tau}{\tau}\Bigr)^{1/\beta}
\frac{\hat\lambda_j+\tilde\kappa^2}{\hat\lambda_j+\kappa^2}.
\]
After matching the leading scale, one has
\[
c_j-1 \sim (\tilde\kappa^2-\kappa^2)\hat\lambda_j^{-1}.
\]
Since Weyl's law gives $\hat\lambda_j\asymp j^{2/d}$, it follows that
$
(c_j-1)^2 \asymp j^{-4/d}
$
and for $d=4$ the corresponding partial sums satisfy
\[
\sum_{j\le N}(c_j-1)^2 \asymp \sum_{j\le N}\frac1j
= \log N + O(1).
\]
Thus $d=4$ is exactly the logarithmic borderline for the
Feldman--H\'ajek square-summability condition in this setting.

Our proof isolates the same critical second-order mismatch in continuous
frequency. The main difference lies in the proof architecture. In \cite{BoliKirc2023}, the
constant-coefficient bounded-domain model is \textit{exactly diagonal} in a common
Dirichlet eigenbasis, so equivalence can be read off from a diagonal
square-summability condition. In our stationary setting, we must control the off-diagonal covariances and
pseudo-covariances introduced by localization. This is our key technical contribution. The connection with Feldman--H\'ajek is through the quantity \(L_N\): in an
exactly diagonal model, \(L_N\) is the truncated Feldman--H\'ajek
square-summability sum, while in our localized setting it becomes the leading
score/information scale on increasing finite-dimensional frequency windows.
Our off-diagonal estimates show that this same scale continues to govern the
true model.
\subsection{Organization of the paper}

Section~2 states the main theorem precisely. Section 3 places
our work in the context of previous work and gives a likelihood
interpretation of our statistic \eqref{eqn:quadstat}.
Section~4 passes from dense observations to path-space laws on
$C(\overline D)$. 
Sections~5--8 develop the localized Fourier coefficients, establish diagonal
asymptotics and off-diagonal decay, and construct the separating statistic; 
Section 9 gives the proof of the main result. Section 10 gives a simulation study showing that an estimation procedure based on the
Whittle pseudo-likelihood recovers the parameter $\alpha$.
Throughout the paper, we write \(a \gtrsim b\) if there exists a constant \(C>0\), depending only on fixed model parameters (such as \(d\), \(\nu\), the domain \(D\), and the cutoff \(\chi\)), such that \(a \ge C\, b\); we write \(a \asymp b\) if both \(a \gtrsim b\) and \(b \gtrsim a\). We use \(O(\cdot)\) and \(o(\cdot)\) in the usual asymptotic sense as \(|k|\to\infty\) (or \(N\to\infty\)), with all implicit constants depending only on these fixed parameters.

\section{Main Result}
Fix $\nu>0$ and write
\[
p:=\nu+2.
\]
Let $Y_j=\{Y_j(t):t\in\R^4\}$, $j=1,2$, be mean-zero stationary Mat\'ern Gaussian fields with parameters
$(\sigma_j,\alpha_j)$ and common smoothness $\nu$. Their spectral densities are
\begin{equation}\label{eq:matern-spectral}
f_j(\xi)=C_\nu\,\sigma_j^2\alpha_j^{2\nu}\,(\alpha_j^2+\abs{\xi}^2)^{-p},
\qquad \xi\in\R^4,
\end{equation}
where $C_\nu>0$ depends only on $\nu$ and on the Fourier convention.

We impose the microergodic matching condition
\begin{equation}\label{eq:micro-match}
\sigma_1^2\alpha_1^{2\nu}=\sigma_2^2\alpha_2^{2\nu}=:m,
\qquad
\alpha_1\neq \alpha_2.
\end{equation}
Under \eqref{eq:micro-match},
\[
f_j(\xi)=C_\nu m\,(\alpha_j^2+\abs{\xi}^2)^{-p}.
\]

The following is our main result.

\begin{theorem}\label{thm:main}
Let $D \subset \R^4$ be a bounded domain with nonempty interior and $S \subset D$ be countable and dense. Denote by $\mathbf{P}_j^S$ the law of the coordinate process
$\{Y_j(t):t\in S\}$ on $\R^S$. 
Assume \eqref{eq:micro-match}. Then
\[
\mathbf{P}_1^S\perp \mathbf{P}_2^S.
\]
In particular, for the dense grid
\[
D_\infty:=\bigcup_{n\ge1} D\cap n^{-1}\Z^4,
\]
the laws of the coordinate processes \(\{Y_j(t):t\in D_\infty\}\) are mutually singular.
\end{theorem}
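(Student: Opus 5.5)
We follow the strategy outlined in the introduction. First we reduce to path space. Mat\'ern fields with $\nu>0$ admit versions with continuous sample paths on $\overline D$; this follows from Kolmogorov--Chentsov, since $K(0)-K(h)=O(|h|^{\min(2\nu,2)-\epsilon})$. For a countable dense $S\subset D$, the restriction map $C(\overline D)\to\R^S$ is injective. Moreover, any continuous function is determined by its values on $S$. Hence every Borel set of $C(\overline D)$ is the preimage of a set in the product $\sigma$-field of $\R^S$, intersected with the measurable set of sequences that extend continuously. So it suffices to build a measurable functional of the path, $\Phi:C(\overline D)\to\R$, and a set $A$ with $\mathbf P_1(A)=1$ and $\mathbf P_2(A)=0$. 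This set $A$ then corresponds to an event in $\R^S$ that separates $\mathbf P_1^S$ from $\mathbf P_2^S$. Concretely, $X_k$ in \eqref{eq:Xk-def} is a Riemann-sum limit of finitely many coordinates in $S$, which is legitimate by path continuity. Therefore $X_k$, and with it $T_N$ in \eqref{eqn:quadstat}, is $\sigma(Y(t):t\in S)$-measurable up to null sets under both laws. The grid $D_\infty$ is a special case.

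Next come the diagonal asymptotics. We have $v_j(k)=(2\pi)^{-4}(f_j*|\widehat\chi|^2)(k)$, and $|\widehat\chi|^2$ is Schwartz. We Taylor expand $(\alpha_j^2+|k-\eta|^2)^{-p}$ around $|k|^{-2p}$ to second order; the tail $|\eta|\gtrsim|k|/2$ is negligible. This gives $v_j(k)=c\,m|k|^{-2p}\bigl(1-p\alpha_j^2|k|^{-2}+A|k|^{-2}+O(|k|^{-3})\bigr)$. The term $A$ comes from the second moments of $|\widehat\chi|^2$ and is independent of $j$. Hence $\delta_k=p(\alpha_1^2-\alpha_2^2)|k|^{-2}+O(|k|^{-3})$, and so $L_N\asymp\sum_{K_0\le|k|_\infty\le N}|k|^{-4}\asymp\log N$ in $d=4$. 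By construction, $\E_1T_N=0$ and $\E_2T_N=L_N^{-1}\sum\delta_k\cdot\delta_k=1$.

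The main obstacle is the variance bound $\Var_j(T_N)\lesssim L_N^{-1}$. By Isserlis' formula for complex Gaussians,
\[
\mathrm{Cov}(|X_k|^2,|X_\ell|^2)=|\E X_k\overline{X_\ell}|^2+|\E X_kX_\ell|^2 .
\]
The covariance is $\E_jX_k\overline{X_\ell}=(2\pi)^{-4}\int f_j(\xi)\widehat\chi(k-\xi)\overline{\widehat\chi(\ell-\xi)}\,d\xi$. We split according to whether $\xi$ is near $k$ or near $\ell$ and use the rapid decay of $\widehat\chi$. This yields
\[
|\E_jX_k\overline{X_\ell}|\lesssim_M \bigl(|k|^{-2p}+|\ell|^{-2p}\bigr)\langle k-\ell\rangle^{-M}.
\]
Likewise $|\E_jX_kX_\ell|\lesssim_M(\cdots)\langle k+\ell\rangle^{-M}$. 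Since $k,\ell\in\N^4$ have $|k+\ell|\ge|k|$, the pseudo-covariance contributes only near-diagonal, rapidly decaying terms. When $|k-\ell|$ is small, $|k|\asymp|\ell|$ and thus $v_1(k)\asymp v_1(\ell)$. The variance is then bounded by
\[
L_N^{-2}\sum_{k,\ell}|\delta_k||\delta_\ell|\langle k-\ell\rangle^{-M}\lesssim L_N^{-2}\sum_k\delta_k^2=L_N^{-1},
\]
using Schur's test together with $|\delta_\ell|\lesssim|\delta_k|$ on the near diagonal. Far from the diagonal, the crude bound $|k|^{-2p}/v_1(\ell)\lesssim(|\ell|/|k|)^{2p}$ is absorbed by the decay $\langle k-\ell\rangle^{-M}$ with $M>2p+4$. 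Under $\mathbf P_2$ we use $v_2\asymp v_1$, so the same bound holds.

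Finally, Chebyshev gives $\mathbf P_j(|T_N-\E_jT_N|>\epsilon)\lesssim \epsilon^{-2}/\log N$. We choose $N_s=\exp(s^2)$, so that $\sum_s 1/\log N_s<\infty$. Borel--Cantelli then gives $T_{N_s}\to0$ $\mathbf P_1$-a.s. and $T_{N_s}\to1$ $\mathbf P_2$-a.s. The event $A=\{T_{N_s}\to0\}$ separates the two laws on path space, and by the first step it also separates $\mathbf P_1^S$ and $\mathbf P_2^S$.
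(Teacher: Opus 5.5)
Your proposal is correct and follows essentially the same route as the paper: a continuous modification with transfer from $C(\overline D)$ to $\R^S$, the expansion $\delta_k=p(\alpha_1^2-\alpha_2^2)|k|^{-2}+o(|k|^{-2})$ giving $L_N\asymp\log N$, the Isserlis identity combined with off-diagonal decay of $\Sigma_j$ and (via positivity of $k,\ell\in\N^4$) of $\Pi_j$ to obtain $\Var_j(T_N)\lesssim L_N^{-1}$, and Borel--Cantelli along $N_s\approx e^{s^2}$. The differences are only technical variants: a second-order Taylor expansion (using that the odd moments of $|\widehat\chi|^2$ vanish) instead of the paper's dominated-convergence argument for $v_2-v_1$; an additive bound $(|k|^{-2p}+|\ell|^{-2p})\langle k-\ell\rangle^{-M}$ with Peetre-type absorption instead of the paper's geometric-mean bound $C\,A(\ell-k)\sqrt{v_j(k)v_j(\ell)}$; and Riemann sums over points of $S$ instead of the lemma that evaluations on $S$ generate $\mathcal B(C(\overline D))$.
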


The proof proceeds by first establishing singularity of the induced path laws on
\(C(\overline D)\) and then transferring that conclusion to dense observation sets.

\section{The critical dimension, Anderes' method, and our approach} \label{sec:comparison}

The mechanism behind the phase transition is already visible in the local
covariance expansion of the Mat\'ern model; see \citet{Anderes2010}.
Near the origin, the covariance admits a sequence of non-polynomial terms with leading orders $|h|^{2\nu}$ and $|h|^{2\nu+2}$ (with logarithmic modifications), whose coefficients are proportional to $\sigma^2 \alpha^{2\nu}$ and $\sigma^2 \alpha^{2\nu+2}$, respectively.
The first coefficient is exactly the microergodic parameter
\(\sigma^2\alpha^{2\nu}\).
Thus, if only the leading irregular term can be recovered from the data,
then one can estimate only the microergodic combination.
If the next irregular term can also be identified, then the parameters
\(\sigma^2\) and \(\alpha\) can be separated.

\citet{Anderes2010} makes this precise using higher-order directional
increments. For a fixed nonzero direction \(h\), define
\[
\Delta_h Y(t)=Y(t+h)-Y(t),
\qquad
\Delta_h^\ell Y(t)=\Delta_h(\Delta_h^{\ell-1}Y(t)).
\]
For observations on the grid
\[
\Omega_n=\Omega\cap n^{-1}\mathbb Z^d,
\]
he considers the quadratic variation statistic
\[
Q_n^\ell(h)
=
\frac{1}{\#\Omega_n}\sum_{j\in\Omega_n}
n^{2\nu}\bigl(\Delta_{h/n}^\ell Y(j)\bigr)^2 .
\]
Lemma~1 of \citet{Anderes2010} shows that for \(\ell>\nu+1\),
\[
\E\bigl(\Delta_{h/n}^\ell Y(t)\bigr)^2
=
a_\nu^\ell(h)n^{-2\nu}
+
b_\nu^\ell(h)n^{-2\nu-2}
+
o(n^{-2\nu-2}),
\]
where \(a_\nu^\ell(h)\) is proportional to \(\sigma^2\alpha^{2\nu}\) and
\(b_\nu^\ell(h)\) is proportional to \(\sigma^2\alpha^{2\nu+2}\).
Thus \(Q_n^\ell(h)\) consistently estimates the leading coefficient
\(a_\nu^\ell(h)\), whose dependence on the unknown parameters occurs
through the microergodic combination \(\sigma^2\alpha^{2\nu}\).

To isolate the next-order coefficient, Anderes compares two increment
orders \(u\neq v\) and considers
\[
R_n^{u,v}(h)
=
n^2\!\left(
Q_n^u(h)
-
\frac{a_\nu^u(h)}{a_\nu^v(h)}Q_n^v(h)
\right).
\]
Theorem~2 of \citet{Anderes2010} shows that, under the stated dimension
conditions, this statistic converges almost surely to
\[
b_\nu^u(h)-\frac{a_\nu^u(h)}{a_\nu^v(h)}\,b_\nu^v(h).
\]
The normalization removes the leading contribution associated with the
coefficients \(a_\nu^u(h)\) and \(a_\nu^v(h)\), leaving a limit determined
by the second-order coefficients \(b_\nu^u(h)\) and \(b_\nu^v(h)\).

Lemma~2 of \citet{Anderes2010} shows that one can choose \(u\) and \(v\)
so that this combination is nonzero.
Since each \(b_\nu^u(h)\) is proportional to \(\sigma^2\alpha^{2\nu+2}\),
the statistic therefore recovers information at the second irregular
order.
Together with the first-order coefficient \(a_\nu^u(h)\), this allows
\(\sigma^2\) and \(\alpha\) to be separated when \(d>4\).

\subsection{Why \(d=4\) is the critical threshold}

A key feature of the construction of \citet{Anderes2010} used to recover
\(\sigma^2\) and \(\alpha\) is that the convergence of the second statistic
requires the dimension condition \(d>4\).
The issue is variance rather than bias.
Theorem~1 of \citet{Anderes2010} shows that
\[
\operatorname{Var}(Q_n^\ell)
\lesssim
\begin{cases}
n^{4(\nu-\ell)}, & 4(\nu-\ell)>-d,\\[4pt]
n^{-d}\log n, & 4(\nu-\ell)=-d,\\[4pt]
n^{-d}, & 4(\nu-\ell)<-d.
\end{cases}
\]
Hence the first coefficient can be recovered once the increment order $\ell$ is high enough.
The second coefficient is harder: the statistic \(R_n^{u,v}(h)\) carries an extra
factor \(n^2\), because the second irregular term is smaller by two powers of \(n\).
To make this renormalized statistic concentrate almost surely, Anderes requires
\[
4<\min\{2u-2\nu,d\},
\qquad
4<\min\{2v-2\nu,d\}.
\]
The increment orders \(u,v\) can be chosen large, but the condition \(d>4\) remains.
This is the source of the phase transition. Below four dimensions, the second
coefficient is lost in the fluctuations, and above four dimensions, it can be recovered
from dense lattice data. Dimension four is exactly the point at which the second-order
signal is still present but accumulates only at the logarithmic scale.

\subsection{Score interpretation and likelihood-ratio expansion} \label{sec:infscale}

It is useful to interpret the statistic $T_N$ in \eqref{eqn:quadstat}
in likelihood-theoretic terms. In particular, the statistic $T_N$
is essentially the Gaussian score in the $\alpha^2$ direction.
Recall the localized Fourier coefficients
\[
X_k(g)
=
\int_D \chi(t)e^{-ik\cdot t}g(t)\,dt,
\qquad k\in\N^4 .
\]
Under a Mat\'ern model with parameters $(m,\alpha)$ we write
\[
v(k;\alpha)=\E_\alpha |X_k|^2 .
\]
Lemma~\ref{lem:diagonal-asymptotics} shows that
\[
v(k;\alpha)=c_\chi f_\alpha(k)(1+o(1)),
\]
where
\[
f_\alpha(\xi)
=
C_\nu m(\alpha^2+|\xi|^2)^{-p},
\qquad p=\nu+2 .
\]
Hence the derivative $\partial_{\alpha^2}$ taken along the
microergodic curve (\textit{i.e.}, at fixed $m$) yields
\[
\partial_{\alpha^2}\log v(k;\alpha)
\sim
-\frac{p}{\alpha^2+|k|^2}
=
-p|k|^{-2}+o(|k|^{-2}) .
\]

Fix two Mat\'ern models \((m,\alpha_1)\) and \((m,\alpha_2)\), and write
\[
v_j(k):=v(k;\alpha_j), \qquad j=1,2.
\]
Recall the frequency mismatch from \eqref{eqn:dk},
\[
\delta_k:=\frac{v_2(k)}{v_1(k)}-1.
\]
By Lemma~\ref{lem:diagonal-asymptotics},
\[
\delta_k
=
p(\alpha_1^2-\alpha_2^2)|k|^{-2}+o(|k|^{-2}),
\qquad p=\nu+2.
\]
Thus \(\delta_k\) coincides, to first order, with the finite-difference approximation
\[
(\alpha_2^2-\alpha_1^2)\,\partial_{\alpha^2}\log v(k;\alpha_1),
\]
and is therefore proportional to the score weight \(\partial_{\alpha^2}\log v(k;\alpha_1)\).
To build intuition, we first analyze the corresponding diagonal
approximation.

Fix \(m>0\) and let \(\alpha_1\neq \alpha_2\). For \(j=1,2\), let
\(\mathbf P_j^{\mathrm{diag}}\) denote the diagonal approximation to the law of
\(\{X_k\}_{k\in\Lambda_N}\) under the Mat\'ern model with parameters
\((m,\alpha_j)\), that is, the law under which the variables
\(\{X_k\}_{k\in\Lambda_N}\) are independent centered complex Gaussian with
variances
$
v_j(k):=\E_j|X_k|^2 .
$
Define
\[
S_N:=\sum_{k\in\Lambda_N}\delta_k
\left(\frac{|X_k|^2}{v_1(k)}-1\right),
\qquad
L_N:=\sum_{k\in\Lambda_N}\delta_k^2.
\]
We show in Lemma \ref{lem:matdiag} below that
\begin{equation}\label{eq:matern-diagonal-llr}
\log\frac{d\mathbf P_2^{\mathrm{diag}}}{d\mathbf P_1^{\mathrm{diag}}}
\approx
S_N-\frac12L_N.
\end{equation}
Thus $S_N$ can be interpreted as the score and $L_N = \sum_{k\in\Lambda_N}\delta_k^2$ is the 
natural information scale. 
The quadratic statistic $T_N$ in \eqref{eqn:quadstat} is just the normalized score:
\begin{equation}\label{eq:TN-score}
T_N=\frac{S_N}{L_N}.
\end{equation}
Thus \(T_N\) is the natural score-type statistic associated with the
difference between the two localized spectral laws. 
In the diagonal approximation, Lemma \ref{lem:matdiag} shows that \(T_N\) has the expected separating behavior:
it converges in probability to \(0\) under the Mat\'ern model with parameter \(\alpha_1\)
and to \(1\) under the Mat\'ern model with parameter \(\alpha_2\).  Our proof for the orthogonality in the $d=4$ case exploits the fact that the true localized covariance structure, although not exactly diagonal,
preserves this same leading score/information mechanism.
\begin{lemma}[Diagonal likelihood expansion for the localized Mat\'ern coefficients]
\label{lem:matdiag}
Fix \(m>0\) and let \(\alpha_1\neq \alpha_2\). For \(j=1,2\), let \(\mathbf P_j^{\mathrm{diag}}\) denote the diagonal approximation
to the law of \(\{X_k\}_{k\in\Lambda_N}\) under the Mat\'ern model with parameters
\((m,\alpha_j)\), that is, the law under which the variables
\(\{X_k\}_{k\in\Lambda_N}\) are independent, centered, circular complex Gaussian with
\[
\E_j^{\mathrm{diag}} |X_k|^2 = v_j(k), \qquad k\in\Lambda_N.
\]
Then
\begin{equation}\label{eq:matern-diagonal-llr}
\log\frac{d\mathbf P_2^{\mathrm{diag}}}{d\mathbf P_1^{\mathrm{diag}}}
=
S_N-\frac12L_N+R_N
\end{equation}
where
\[
R_N=o_{\mathbf P_j^{\mathrm{diag}}}(L_N), \qquad j=1,2.
\]

We have,
\[
\E_1^{\mathrm{diag}}[S_N]=0,
\qquad
\Var_1^{\mathrm{diag}}(S_N)=L_N,
\]
and
\[
\E_2^{\mathrm{diag}}[S_N]=L_N,
\qquad
\Var_2^{\mathrm{diag}}(S_N)
=
\sum_{k\in\Lambda_N}\delta_k^2(1+\delta_k)^2 \sim L_N.
\]
Moreover
\[
T_N=\frac{S_N}{L_N}\to 0
\quad\text{in \(\mathbf P_1^{\mathrm{diag}}\)-probability},
\qquad
T_N=\frac{S_N}{L_N}\to 1
\quad\text{in \(\mathbf P_2^{\mathrm{diag}}\)-probability}.
\]
\end{lemma}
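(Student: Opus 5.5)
Under the diagonal laws the log-likelihood ratio splits into a sum of independent one-dimensional terms, so the plan is to write it exactly, Taylor-expand each term in $\delta_k$, and control moments. Set $Z_k:=|X_k|^2/v_1(k)$. Under $\mathbf P_1^{\mathrm{diag}}$ the $Z_k$ are i.i.d.\ standard exponential (circular complex Gaussian), and under $\mathbf P_2^{\mathrm{diag}}$, $Z_k=(1+\delta_k)E_k$ with $E_k$ i.i.d.\ standard exponential. The density of a circular complex Gaussian with variance $v$ is $(\pi v)^{-1}e^{-|x|^2/v}$. Hence
\[
\log\frac{d\mathbf P_2^{\mathrm{diag}}}{d\mathbf P_1^{\mathrm{diag}}}
=\sum_{k\in\Lambda_N}\Bigl(-\log(1+\delta_k)+Z_k\frac{\delta_k}{1+\delta_k}\Bigr).
\]

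\textbf{Step 1: the expansion.} Write $\frac{\delta}{1+\delta}=\delta-\frac{\delta^2}{1+\delta}$ and $-\log(1+\delta)=-\delta+\frac{\delta^2}{2}+O(\delta^3)$. Then each summand equals
\[
\delta_k(Z_k-1)+\tfrac12\delta_k^2-\tfrac{\delta_k^2}{1+\delta_k}Z_k+O(\delta_k^3).
\]
Summing gives $S_N-\tfrac12L_N+R_N$, where
\[
R_N=-\sum_k\tfrac{\delta_k^2}{1+\delta_k}(Z_k-1)+\sum_k\Bigl(\delta_k^2-\tfrac{\delta_k^2}{1+\delta_k}\Bigr)+O\Bigl(\sum_k|\delta_k|^3\Bigr).
\]

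\textbf{Step 2: inputs from Lemma~\ref{lem:diagonal-asymptotics}.} We use $\delta_k=p(\alpha_1^2-\alpha_2^2)|k|^{-2}+o(|k|^{-2})$. Choosing $K_0$ large ensures $|\delta_k|\le 1/2$ on $\Lambda_N$. In $\R^4$ this yields:
\begin{itemize}[label={}]
\item $\sum_k|\delta_k|^3\lesssim\sum_k|k|^{-6}<\infty$;
\item $\sum_k\delta_k^4<\infty$;
\item $L_N\asymp\log N\to\infty$.
\end{itemize}
The deterministic parts of $R_N$ are therefore $O(1)=o(L_N)$. The centered random part has variance, under $\mathbf P_j^{\mathrm{diag}}$, bounded by a multiple of $\sum_k\delta_k^4(1+\delta_k)^2=O(1)$. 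Its mean under $\mathbf P_2^{\mathrm{diag}}$ is $-\sum_k\delta_k^3/(1+\delta_k)=O(1)$. Chebyshev then gives $R_N=O_{\mathbf P}(1)=o_{\mathbf P}(L_N)$ under both laws.

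\textbf{Step 3: moments of $S_N$.} We use $\E Z_k=1$ and $\Var Z_k=1$ under $\mathbf P_1^{\mathrm{diag}}$, together with independence, to get $\E_1 S_N=0$ and $\Var_1 S_N=\sum\delta_k^2=L_N$. Under $\mathbf P_2^{\mathrm{diag}}$ we have $\E Z_k=1+\delta_k$ and $\Var Z_k=(1+\delta_k)^2$, so $\E_2S_N=L_N$ and $\Var_2S_N=\sum\delta_k^2(1+\delta_k)^2$. The latter equals $L_N+O(\sum|\delta_k|^3)\sim L_N$ because $L_N\to\infty$.

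\textbf{Step 4: convergence of $T_N$.} Chebyshev gives $\mathbf P_j(|T_N-\E_jT_N|>\varepsilon)\le\Var_j(S_N)/(\varepsilon^2L_N^2)\lesssim1/(\varepsilon^2L_N)\to0$. Combined with $\E_1T_N=0$ and $\E_2T_N=1$, this yields the claimed convergence in probability.

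The only genuinely non-routine point is Step 2, specifically ensuring $L_N\to\infty$ while the cubic remainders stay summable. This rests entirely on the precise $|k|^{-2}$ rate for $\delta_k$ (and its nonvanishing leading constant, since $\alpha_1\ne\alpha_2$) from Lemma~\ref{lem:diagonal-asymptotics}, together with the lattice count $\#\{k:\|k\|_\infty\approx r\}\asymp r^3$.
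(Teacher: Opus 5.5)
Your proof is correct and follows the paper's route: the exact log-likelihood ratio as a sum of independent terms, a Taylor expansion in $\delta_k$, and Chebyshev using $\sum\delta_k^4,\ \sum|\delta_k|^3=O(1)$ together with $L_N\asymp\log N$. The only difference is bookkeeping. You keep $\delta/(1+\delta)=\delta-\delta^2/(1+\delta)$ exact, so your cubic remainder is deterministic, whereas the paper bounds a random remainder $E_N$ through first moments. You also explicitly account for the $O(1)$ mean of the random part of $R_N$ under $\mathbf P_2^{\mathrm{diag}}$, which the paper leaves implicit.
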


\begin{proof}
For each \(j=1,2\), the diagonal approximation treats the coefficients
\(\{X_k\}_{k\in\Lambda_N}\) as independent centered circular complex Gaussian
variables with variances \(v_j(k)\). Since the density of a centered circular
complex Gaussian variable with variance \(v\) is
\[
\varphi_v(z)=\frac1{\pi v}\exp\!\left(-\frac{|z|^2}{v}\right), \qquad z\in\mathbb C,
\]
we obtain
\[
\log\frac{d\mathbf P_2^{\mathrm{diag}}}{d\mathbf P_1^{\mathrm{diag}}}
=
\sum_{k\in\Lambda_N}
\left[
-\log\frac{v_2(k)}{v_1(k)}
+
|X_k|^2\left(\frac1{v_1(k)}-\frac1{v_2(k)}\right)
\right].
\]
Using \(v_2(k)=v_1(k)(1+\delta_k)\), this becomes
\[
\log\frac{d\mathbf P_2^{\mathrm{diag}}}{d\mathbf P_1^{\mathrm{diag}}}
=
\sum_{k\in\Lambda_N}
\left[
-\log(1+\delta_k)
+
\frac{|X_k|^2}{v_1(k)}\frac{\delta_k}{1+\delta_k}
\right].
\]
For \(|\delta|\) small,
\[
\log(1+\delta)=\delta-\frac12\delta^2+O(\delta^3),
\qquad
\frac{\delta}{1+\delta}=\delta-\delta^2+O(\delta^3),
\]
hence
\begin{align*}
\log\frac{d\mathbf P_2^{\mathrm{diag}}}{d\mathbf P_1^{\mathrm{diag}}}
&=
\sum_{k\in\Lambda_N}
\delta_k\left(\frac{|X_k|^2}{v_1(k)}-1\right)
-\frac12\sum_{k\in\Lambda_N}\delta_k^2 \\
&\quad
-\sum_{k\in\Lambda_N}\delta_k^2
\left(\frac{|X_k|^2}{v_1(k)}-1\right)
+
E_N,
\end{align*}
where
\[
|E_N|
\le
C\sum_{k\in\Lambda_N}
\left(1+\frac{|X_k|^2}{v_1(k)}\right)|\delta_k|^3.
\]
Thus \eqref{eq:matern-diagonal-llr} holds with
\[
R_N
:=
-\sum_{k\in\Lambda_N}\delta_k^2
\left(\frac{|X_k|^2}{v_1(k)}-1\right)
+
E_N.
\]
We next show that \(R_N=o_{\mathbf P_j^{\mathrm{diag}}}(L_N)\) for \(j=1,2\).
Set
\[
Y_k:=\frac{|X_k|^2}{v_1(k)}-1.
\]
Then
\[
R_N=-\sum_{k\in\Lambda_N}\delta_k^2Y_k+E_N.
\]

Under \(\mathbf P_1^{\mathrm{diag}}\), the variable \(X_k/v_1(k)^{1/2}\) is
standard centered circular complex Gaussian, so
\[
\E_1^{\mathrm{diag}}[Y_k]=0,
\qquad
\Var_1^{\mathrm{diag}}(Y_k)=1.
\]
By independence,
\[
\Var_1^{\mathrm{diag}}\!\left(\sum_{k\in\Lambda_N}\delta_k^2Y_k\right)
=
\sum_{k\in\Lambda_N}\delta_k^4.
\]
Under \(\mathbf P_2^{\mathrm{diag}}\),
\[
\E_2^{\mathrm{diag}}[Y_k]=\delta_k,
\qquad
\Var_2^{\mathrm{diag}}(Y_k)=(1+\delta_k)^2,
\]
and therefore
\[
\Var_2^{\mathrm{diag}}\!\left(\sum_{k\in\Lambda_N}\delta_k^2Y_k\right)
=
\sum_{k\in\Lambda_N}\delta_k^4(1+\delta_k)^2.
\]
By Lemma~\ref{lem:diagonal-asymptotics},
\[
\delta_k=p(\alpha_1^2-\alpha_2^2)|k|^{-2}+o(|k|^{-2}),
\]
so
\[
\sum_{k\in\Lambda_N}\delta_k^4
\asymp
\sum_{k\in\Lambda_N}|k|^{-8}
=O(1),
\qquad
L_N=\sum_{k\in\Lambda_N}\delta_k^2\asymp \log N.
\]
Hence
\[
\sum_{k\in\Lambda_N}\delta_k^2Y_k
=
O_{\mathbf P_j^{\mathrm{diag}}}(1),
\qquad j=1,2.
\]
For the cubic error term, again by \(\delta_k=O(|k|^{-2})\),
\[
\sum_{k\in\Lambda_N}|\delta_k|^3
\asymp
\sum_{k\in\Lambda_N}|k|^{-6}
=
O(1).
\]
Moreover, under each \(\mathbf P_j^{\mathrm{diag}}\), the variables
\(|X_k|^2/v_1(k)\) have uniformly bounded first moments, since
\[
\E_1^{\mathrm{diag}}\!\left[\frac{|X_k|^2}{v_1(k)}\right]=1,
\qquad
\E_2^{\mathrm{diag}}\!\left[\frac{|X_k|^2}{v_1(k)}\right]
=
\frac{v_2(k)}{v_1(k)}=1+\delta_k.
\]
Therefore
\[
\E_j^{\mathrm{diag}}|E_N|
\le
C\sum_{k\in\Lambda_N}
\left(1+\E_j^{\mathrm{diag}}\!\left[\frac{|X_k|^2}{v_1(k)}\right]\right)|\delta_k|^3
=O(1),
\qquad j=1,2,
\]
and hence
\[
E_N=O_{\mathbf P_j^{\mathrm{diag}}}(1),
\qquad j=1,2.
\]
Combining the two bounds gives
\[
R_N=O_{\mathbf P_j^{\mathrm{diag}}}(1),
\qquad j=1,2.
\]
Since \(L_N\asymp \log N\to\infty\), it follows that
\[
R_N=o_{\mathbf P_j^{\mathrm{diag}}}(L_N),
\qquad j=1,2.
\]
It remains to verify the stated moments of \(S_N\). Since
\[
S_N=\sum_{k\in\Lambda_N}\delta_kY_k,
\]
the above calculations give
\[
\E_1^{\mathrm{diag}}[S_N]=0,
\qquad
\Var_1^{\mathrm{diag}}(S_N)=\sum_{k\in\Lambda_N}\delta_k^2=L_N,
\]
and
\[
\E_2^{\mathrm{diag}}[S_N]
=
\sum_{k\in\Lambda_N}\delta_k\,\E_2^{\mathrm{diag}}[Y_k]
=
\sum_{k\in\Lambda_N}\delta_k^2
=
L_N,
\]
while
\[
\Var_2^{\mathrm{diag}}(S_N)
=
\sum_{k\in\Lambda_N}\delta_k^2\Var_2^{\mathrm{diag}}(Y_k)
=
\sum_{k\in\Lambda_N}\delta_k^2(1+\delta_k)^2.
\]

Finally,
\[
\Var_1^{\mathrm{diag}}\!\left(\frac{S_N}{L_N}\right)=\frac1{L_N}\to0,
\]
and
\[
\Var_2^{\mathrm{diag}}\!\left(\frac{S_N}{L_N}\right)
=
\frac{1}{L_N^2}\sum_{k\in\Lambda_N}\delta_k^2(1+\delta_k)^2
\to0,
\]
since \(\sum \delta_k^2(1+\delta_k)^2\asymp L_N\). Hence Chebyshev's inequality
yields
\[
T_N=\frac{S_N}{L_N}\to 0
\quad\text{in \(\mathbf P_1^{\mathrm{diag}}\)-probability},
\qquad
T_N=\frac{S_N}{L_N}\to 1
\quad\text{in \(\mathbf P_2^{\mathrm{diag}}\)-probability}
\]
and the proof is finished.
\end{proof}
\subsection{Simulation study}
In this section we present some simulation studies that explore the behavior of the
statistic $T_N$.
All simulations are carried out directly in frequency space in dimension \(d=4\). We fix
\[
\mathcal I_M=\{-M,\dots,M-1\}^4,\qquad \xi_n=h_\xi n,\qquad h_\xi=\frac1q,\quad q\in\mathbb N,
\]
so the simulated spectrum is truncated to the box \([-\Omega,\Omega]^4\) with \(\Omega=Mh_\xi\).
 The condition \(h_\xi=1/q\) ensures that integer frequencies lie on the lattice: \(k=\xi_{qk}\).
Let $\phi \in C_c^\infty([-1,1])$ be the smooth bump
\[
\phi(u)=\exp\!\Bigl(-\frac{1}{1-u^2}\Bigr)\mathbf 1_{\{|u|<1\}},
\]
and define the tensor-product localization function at scale $R>0$ by
\[
\chi_R(t)=\prod_{r=1}^4 \phi(t_r / R).
\]
Its Fourier transform is
\[
\widehat{\chi}_R(\xi)=R^4 \prod_{r=1}^4 \widehat{\phi}(R \xi_r).
\]
In the simulation, we work with the sampled kernel
\[
K(d)=\widehat{\chi}_R(h_\xi d),
\]
where $d \in \mathbb{Z}^4$ indexes the frequency lattice.

For model \(j=1,2\), the discrete spectral density is
\[
f_j(\xi)=m_j(\alpha_j^2+|\xi|^2)^{-(\nu+2)},\qquad m_j=\sigma_j^2\alpha_j^{2\nu}.
\]
In the microergodically matched experiments we impose \(m_1 = m_2 := m\).
Since the field is real-valued, we construct a Hermitian-symmetric complex Gaussian family \(\{G_n\}_{n\in\mathcal I_M}\) as follows: we first draw independent mean-zero complex Gaussian variables on a set of representatives of the pairs \(\{n,-n\}\), and then extend to all \(n\in\mathcal I_M\) by imposing
$
G_{-n} = \overline{G_n},$
with \(G_n\) taken to be real Gaussian on self-conjugate modes. We then define
\[
Z_n^{(j)} = h_\xi^2 \sqrt{f_j(\xi_n)}\, G_n,
\qquad n \in \mathcal I_M.
\]
The localized coefficients are then defined by the discrete convolution
\[
X_n^{(j)}=\sum_{r\in\mathcal I_M} K(n-r)\,Z_r^{(j)},\qquad 
n\in\mathcal I_M,
\]
with \(X_k^{(j)}:=X_{qk}^{(j)}\) whenever \(qk\in\mathcal I_M\). Thus \(X_k\) is obtained as a discretization of the spectral representation
\[
X_k
=
\int_{\mathbb R^4}
\widehat{\chi}(k-\xi)\,\sqrt{f_j(\xi)}\,W(d\xi).
\]
The test is evaluated on the positive-frequency shell
\[
\Lambda_{K_0,K_1}
=
\{k\in\mathbb N_0^4: K_0\le \|k\|_\infty \le K_1\},
\]
retaining only those \(k\) for which \(qk\in\mathcal I_M\). Hence the simulation has two cutoffs: the global spectral cutoff \([-\Omega,\Omega]^4\) coming from the grid, and the shell cutoff \([K_0,K_1]\) used in the statistic.

The variance normalization is computed on the frequency grid as
\[
v_j(n)=h_\xi^4\sum_{r\in\mathcal I_M} f_j(\xi_r)\,|K(n-r)|^2.
\]
For integer frequencies \(k\) such that \(qk\in\mathcal I_M\), we identify
\[
X_k := X_{qk},
\qquad
v_j(k) := v_j(qk),
\]
so that \(v_j(k)=\E[|X_k|^2]\). Both \(X^{(j)}\) and the variance functions are discrete convolutions on the frequency grid, and are therefore computed efficiently by FFT. 

The score statistic is
\[
S_N=\sum_{k\in\Lambda_{K_0,K_1}}
\delta_k\Bigl(\frac{|X_k|^2}{v_1(k)}-1\Bigr),
\qquad
\delta_k=\frac{v_2(k)}{v_1(k)}-1,
\]
with normalization
\[
L_N=\sum_{k\in\Lambda_{K_0,K_1}}\delta_k^2,
\qquad
T_N=\frac{S_N}{L_N}.
\]

\subsection{Simulation results}

We report Monte Carlo estimates of the normalized statistic \(T_N=S_N/L_N\) under two microergodically matched Mat\'ern models with \(m=1\) and \(\nu=1.5\). The simulation parameters are
\[
M=20,\quad h_\xi=0.5,\quad K_0=3,\quad K_1=9.
\]
We use taper radius $R=2$, and approximate the Fourier transform 
$\widehat{\phi}$ numerically using Simpson quadrature with $Q=400$ subintervals.
As seen from the tables below (with $200$ Monte Carlo iterations) and figures \ref{fig:alpha2} and \ref{fig:alpha12}, the test
statistic $T_N$ is able to separate the models.
\begin{figure}[ht]
\centering
\includegraphics[width=0.6\textwidth]{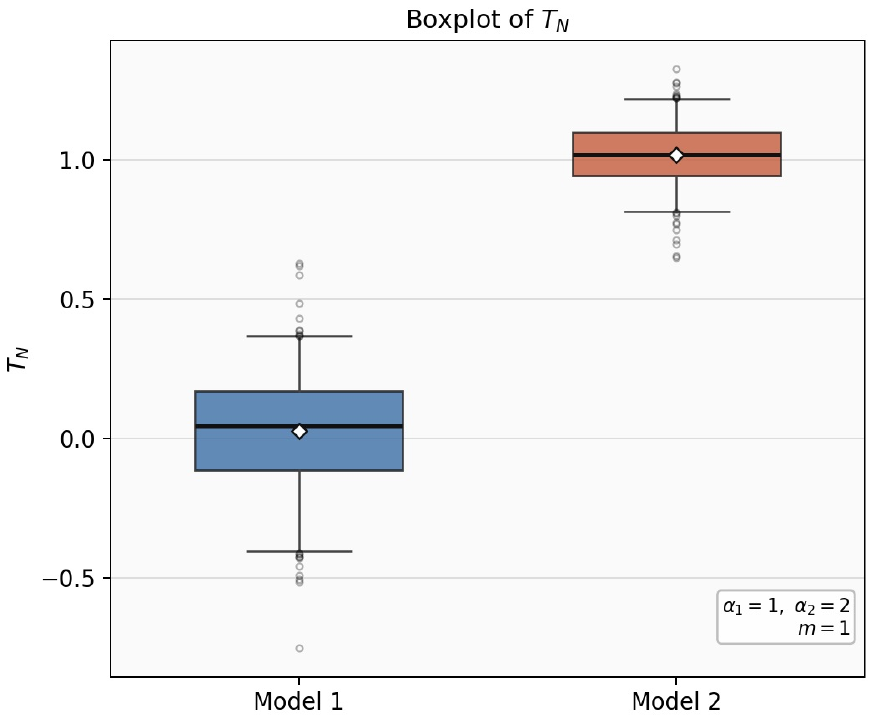}
\caption{Empirical distribution of \(T_N\) in Experiment 1 under the two microergodically matched models with \(\alpha_1=1\) and \(\alpha_2=2\). The statistic concentrates near \(0\) under model 1 and near \(1\) under model 2.}
\label{fig:alpha2}
\end{figure}

\begin{figure}[ht]
\centering
\includegraphics[width=0.6\textwidth]{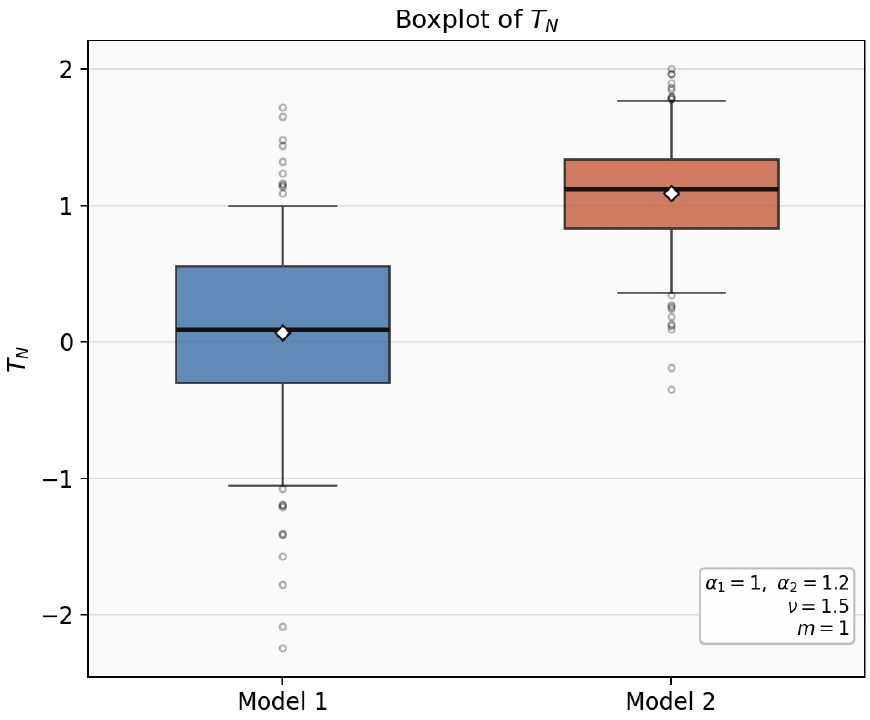}
\caption{Empirical distribution of \(T_N\) in Experiment 2 under the two microergodically matched models with \(\alpha_1=1\) and \(\alpha_2=1.2\). Separation persists but with increased variance due to weaker spectral separation.}
\label{fig:alpha12}
\end{figure}
\noindent\textbf{Experiment 1.} 
\[
(\alpha_1,\sigma_1)=(1,1),\qquad 
(\alpha_2,\sigma_2)=\bigl(2,\,2^{-3/2}\bigr),
\]
so that \(m_j=\sigma_j^2\alpha_j^{2\nu}=1\). The empirical mean and variance of \(T_N\) are:
\[
\begin{array}{c|cc}
\text{Model} & \E[T_N] & \Var(T_N) \\
\hline
1 & 0.026 & 0.052 \\
2 & 1.016 & 0.015
\end{array}
\]

\medskip

\noindent\textbf{Experiment 2.} 
\[
(\alpha_1,\sigma_1)=(1,1),\qquad 
(\alpha_2,\sigma_2)=\bigl(1.2,\,(1/1.2)^{3/2}\bigr),
\]
again with \(m_j=1\). The empirical mean and variance are:
\[
\begin{array}{c|cc}
\text{Model} & \E[T_N] & \Var(T_N) \\
\hline
1 & 0.067 & 0.456 \\
2 & 1.09 & 0.18
\end{array}
\] 
\section{Continuous modifications and dense observation sigma-fields}

We use the Fourier transform convention
\[
\what{g}(\xi):=\int_{\R^4} e^{-i\xi\cdot t} g(t)\,dt,
\qquad
g(t)=(2\pi)^{-4}\int_{\R^4} e^{i\xi\cdot t}\what g(\xi)\,d\xi.
\]

The following result is well-known, but we include it here for the reader's convenience.

\begin{lemma}\label{lem:continuous-modification}
Let $Y$ be a stationary Mat\'ern Gaussian random field on $\R^4$
with parameters $\sigma^2>0$, $\alpha>0$, and smoothness $\nu>0$.
Then $Y$ admits a modification with almost surely continuous
sample paths on $\overline D$.
\end{lemma}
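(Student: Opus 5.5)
The plan is to apply the Kolmogorov--Chentsov continuity criterion, which reduces the claim to a Hölder-type bound on the increment variance $\E|Y(t)-Y(s)|^2 = 2(K(0)-K(t-s))$. Because $Y$ is Gaussian, a bound $\E|Y(t)-Y(s)|^2\le C|t-s|^{2\gamma}$ for some $\gamma>0$ and $|t-s|\le 1$ controls every moment:
\[
\E|Y(t)-Y(s)|^{2n}=c_n\bigl(\E|Y(t)-Y(s)|^2\bigr)^n\le C_n|t-s|^{2n\gamma}.
\]
Choosing $n$ with $2n\gamma>4$, the Kolmogorov--Chentsov theorem in dimension $4$ then yields a modification that is locally Hölder continuous on the compact set $\overline D$, and in particular has continuous paths.

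The increment variance estimate I would derive from the spectral representation. With $f$ as in \eqref{eq:matern-spectral},
\[
K(0)-K(h)=(2\pi)^{-4}\int_{\R^4}\bigl(1-\cos(\xi\cdot h)\bigr)f(\xi)\,d\xi .
\]
Fix $\gamma\in(0,\min(\nu,1))$ and use the elementary inequality $1-\cos x\le \min(2,x^2/2)\le 2|x|^{2\gamma}$. This gives
\[
K(0)-K(h)\le C|h|^{2\gamma}\int_{\R^4}|\xi|^{2\gamma}(\alpha^2+|\xi|^2)^{-\nu-2}\,d\xi .
\]
The integral is finite: the integrand is bounded near the origin, and at infinity it behaves like $|\xi|^{2\gamma-2\nu-4}$, which is integrable in $\R^4$ precisely because $2\gamma<2\nu$. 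This yields the required bound with $C$ depending only on $\sigma,\alpha,\nu$.

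Nothing here is a genuine obstacle. The only point needing care is the choice $\gamma<\min(\nu,1)$, so that the argument covers rough fields with small $\nu$ as well as smooth ones; the restriction $\gamma<1$ is harmless. An alternative route for $\nu$ small would be to use the known behaviour $K(0)-K(h)\asymp|h|^{2\min(\nu,1)}$ (with a logarithmic factor when $\nu=1$), read off from the Bessel function expansion, but the spectral bound above avoids that case distinction. Finally, since $\overline D$ is compact, one applies the criterion on a cube containing $\overline D$ and restricts the resulting modification to $\overline D$.
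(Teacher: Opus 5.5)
Your proposal is correct and follows essentially the same route as the paper. The shared steps are: a spectral bound on the increment variance via $|e^{ix}-1|^2=2(1-\cos x)\lesssim |x|^{2\gamma}$ with $\gamma\in(0,\min\{\nu,1\})$; finiteness of $\int|\xi|^{2\gamma}f(\xi)\,d\xi$ because $\gamma<\nu$; Gaussian moment equivalence; and Kolmogorov's criterion with $2n\gamma>4$ (the paper phrases the elementary inequality as $|e^{ix}-1|\le C_\beta|x|^\beta$ and applies the criterion directly on $\overline D$ rather than on a cube containing it).
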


\begin{proof}
Choose any $\beta\in(0,\min\{\nu,1\})$.
By stationarity and \eqref{eq:matern-spectral},
\[
\E|Y(t)-Y(s)|^2
=(2\pi)^{-4}\int_{\R^4}
|e^{i\xi\cdot t}-e^{i\xi\cdot s}|^2 f(\xi)\,d\xi .
\]
Since $0<\beta\le1$, there exists $C_\beta$ such that
\[
|e^{ix}-1|\le C_\beta |x|^\beta ,
\qquad x\in\R .
\]
Applying this with $x=\xi\cdot(t-s)$ gives
\[
|e^{i\xi\cdot t}-e^{i\xi\cdot s}|
=|e^{i\xi\cdot(t-s)}-1|
\le C_\beta |\xi|^\beta |t-s|^\beta .
\]
Substituting into the spectral representation yields
\[
\E|Y(t)-Y(s)|^2
\le C |t-s|^{2\beta}
\int_{\R^4} |\xi|^{2\beta} f(\xi)\,d\xi .
\]

Recall from \eqref{eqn:highfreq} that for $|\xi|\ge1$,
$
f(\xi)\asymp |\xi|^{-2\nu-4}$
so the integrand behaves like
\[
|\xi|^{2\beta}f(\xi)\asymp |\xi|^{2\beta-2\nu-4}.
\]

Passing to radial coordinates,
\[
\int_{|\xi|\ge1} |\xi|^{2\beta-2\nu-4}\,d\xi
= C\int_1^\infty r^{2\beta-2\nu-1}\,dr,
\]
which is finite precisely when $\beta<\nu$.

\medskip
Thus
\[
\E|Y(t)-Y(s)|^2 \le C|t-s|^{2\beta}.
\]

Since $Y(t)-Y(s)$ is Gaussian, for every $q\ge2$,
\[
\E|Y(t)-Y(s)|^q \le C_q |t-s|^{\beta q}.
\]
Choosing $q$ so large that $\beta q>4$, Kolmogorov's
continuity theorem yields a continuous modification on the compact set
$\overline D$.
\end{proof}

Let \(\widetilde Y_j\) denote a continuous modification of \(Y_j\) on the
compact set \(\overline D\) as implied by Lemma \ref{lem:continuous-modification} and
let  $P_j := \mathrm{Law}(\widetilde Y_j)$
denote its law on \(C(\overline D)\), equipped with the uniform norm.
We first show that, to prove the singularity of the laws of the original coordinate process $\{Y_j(t):t\in S\}$, it is enough to show that $\mathbf P_1\perp \mathbf P_2$ on $C(\overline D)$.

\begin{lemma}\label{lem:borel-generated}
Let $K$ be a compact metric space and let $S\subset K$ be countable and dense. Then
\[
\mathcal B(C(K))=\sigma\{f\mapsto f(x):x\in S\},
\]
where $C(K)$ is equipped with the uniform norm.
\end{lemma}

\begin{proof}
Each evaluation map $f\mapsto f(x)$ is continuous on $C(K)$, so the right-hand side is contained in $\mathcal B(C(K))$.

For the reverse inclusion, fix $g\in C(K)$ and $\varepsilon>0$. Because $S$ is dense and both $f$ and $g$ are continuous,
\[
\norm{f-g}_\infty = \sup_{x\in K}\abs{f(x)-g(x)} = \sup_{x\in S}\abs{f(x)-g(x)}.
\]
Hence the open ball $B(g,\varepsilon)$ can be written as
\[
B(g,\varepsilon)=\bigcap_{x\in S}\{f\in C(K): \abs{f(x)-g(x)}<\varepsilon\},
\]
which belongs to $\sigma\{f\mapsto f(x):x\in S\}$. Since open balls generate the Borel sigma-field, the claim follows.
\end{proof}

\begin{corollary}\label{cor:dense-observations-from-paths}
Let $S\subset D$ be countable and dense. The law of the original coordinate process $\{Y_j(t):t\in S\}$ equals the pushforward of $\mathbf P_j$ under the evaluation map $E_S:C(\overline D)\to\R^S$, $E_S(f)=(f(t))_{t\in S}$. In particular, if $\mathbf P_1\perp \mathbf P_2$ on $C(\overline D)$, then $\mathbf P_1^S\perp \mathbf P_2^S$ on $\R^S$.
\end{corollary}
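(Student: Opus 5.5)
The plan is to identify the two laws on $\R^S$ and then transport singularity through a measurable map. Let $\widetilde Y_j$ be the continuous modification of $Y_j$ on $\overline D$ from Lemma~\ref{lem:continuous-modification}. By definition of a modification, $\widetilde Y_j(t)=Y_j(t)$ almost surely for each fixed $t$. Since $S$ is countable, a countable intersection of almost sure events gives $\widetilde Y_j(t)=Y_j(t)$ for all $t\in S$ simultaneously, almost surely. Hence the random vectors $(Y_j(t))_{t\in S}$ and $(\widetilde Y_j(t))_{t\in S}=E_S(\widetilde Y_j)$ have the same law on $(\R^S,\mathcal B(\R)^{\otimes S})$.

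Next I check measurability of $E_S$. Each coordinate $f\mapsto f(t)$ is continuous on $C(\overline D)$, hence Borel. The product $\sigma$-field on $\R^S$ is generated by the coordinate maps, so $E_S$ is $\mathcal B(C(\overline D))/\mathcal B(\R)^{\otimes S}$-measurable. Therefore $\mathbf P_j^S=\mathbf P_j\circ E_S^{-1}$, which is the first claim.

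For the singularity transfer, suppose $A\in\mathcal B(C(\overline D))$ satisfies $\mathbf P_1(A)=1$ and $\mathbf P_2(A)=0$. Singularity is not automatically preserved under pushforward, since $E_S(A)$ need not be measurable or disjoint from $E_S(A^c)$. This is where Lemma~\ref{lem:borel-generated} is used, applied with $K=\overline D$; note that $S$ is dense in $D$ and hence in $\overline D$. The lemma gives $\mathcal B(C(\overline D))=\sigma(E_S)=\{E_S^{-1}(B):B\in\mathcal B(\R)^{\otimes S}\}$, because $\sigma(E_S)$ is exactly the $\sigma$-field generated by the evaluations at points of $S$. So $A=E_S^{-1}(B)$ for some product-measurable $B$. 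Then
\[
\mathbf P_1^S(B)=\mathbf P_1(A)=1,
\qquad
\mathbf P_2^S(B)=\mathbf P_2(A)=0,
\]
which gives $\mathbf P_1^S\perp\mathbf P_2^S$.

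The only delicate point is this last step: representing the separating set as a pullback. Lemma~\ref{lem:borel-generated} handles it directly, so no step poses a real obstacle.
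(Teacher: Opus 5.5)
Your proposal is correct and follows the paper's proof essentially step for step. Agreement of $\widetilde Y_j$ and $Y_j$ on the countable set $S$ identifies $\mathbf P_j^S$ with $\mathbf P_j\circ E_S^{-1}$, and Lemma~\ref{lem:borel-generated} with $K=\overline D$ lets you write the separating set as $A=E_S^{-1}(B)$; your explicit checks that $E_S$ is measurable and that $\sigma(E_S)=\{E_S^{-1}(B):B\in\mathcal B(\R)^{\otimes S}\}$ spell out what the paper leaves implicit.
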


\begin{proof}
Because $S$ is countable, the continuous modification and the original process agree simultaneously at every point of $S$ on an event of probability one. Therefore the induced laws on $\R^S$ are identical.

Since $\overline{S} \supseteq \overline{D}$, the set $S$ is dense in $\overline{D}$. If $\mathbf P_1\perp \mathbf P_2$, let $A\in\mathcal B(C(\overline D))$ satisfy $\mathbf P_1(A)=1$ and $\mathbf P_2(A)=0$. By Lemma~\ref{lem:borel-generated}, there exists $B\in\mathcal B(\R^S)$ such that $A=E_S^{-1}(B)$. Hence
\[
\mathbf P_1^S(B)=\mathbf P_1(A)=1,
\qquad
\mathbf P_2^S(B)=\mathbf P_2(A)=0,
\]
so $\mathbf P_1^S\perp \mathbf P_2^S$.
\end{proof}

Thus it remains to prove $\mathbf P_1\perp \mathbf P_2$ on $C(\overline D)$.

\section{Localized Fourier coefficients}

Choose a nonzero real-valued cutoff $\chi\in C_c^\infty(\operatorname{int}D)$ where $\operatorname{int}D$
denotes the interior of $D$.
Thus $\chi$ is infinitely differentiable, compactly supported, and vanishes near the boundary $\partial D$.
Its Fourier transform
\[
\widehat{\chi}(\eta)=\int_{\mathbb R^4} e^{-i\eta\cdot t}\chi(t)\,dt
\]
is a Schwartz function \cite[Theorem 1.13]{duo2024fourier}: for every $M\ge 0$ there exists $C_M<\infty$ such that
\begin{equation}\label{eqn:rapid}
|\widehat{\chi}(\eta)|\le C_M(1+|\eta|)^{-M},
\qquad \eta\in\mathbb R^4.
\end{equation}
This rapid decay is the key localization input in the proof, since it forces the covariances between
different frequency coefficients to decay rapidly away from the diagonal.

For $k\in\N^4$ and a Gaussian random field $Y$ recall the localized Fourier coefficient from from \eqref{eq:Xk-def}:
\begin{equation}
X_k := X_k(Y) = \int_D \chi(t)e^{-ik\cdot t}Y(t)\,dt .
\end{equation}
For each $k$, the coefficient $X_k$ is a mean-zero complex Gaussian random variable.
Write
\begin{equation}\label{eqn:sigmapi}
\Sigma_j(k,\ell):=\E_j\big[X_k\overline{X_\ell}\big],
\qquad
\Pi_j(k,\ell):=\E_j\big[X_k X_\ell\big].
\end{equation}
Set
\[
v_j(k):=\Sigma_j(k,k)=\E_j\abs{X_k}^2.
\]
\begin{lemma}\label{lem:cov-formulas}
For every $k,\ell\in\N^4$,
\begin{align}
\Sigma_j(k,\ell)
&=(2\pi)^{-4}\int_{\R^4} f_j(\xi)\,\what\chi(k-\xi)\,\overline{\what\chi(\ell-\xi)}\,d\xi,
\label{eq:Sigma-formula}
\\
\Pi_j(k,\ell)
&=(2\pi)^{-4}\int_{\R^4} f_j(\xi)\,\what\chi(k-\xi)\,\what\chi(\ell+\xi)\,d\xi,
\label{eq:Pi-formula}
\\
v_j(k)
&=(2\pi)^{-4}\int_{\R^4} f_j(k-\eta)\,\abs{\what\chi(\eta)}^2\,d\eta.
\label{eq:vi-formula}
\end{align}
\end{lemma}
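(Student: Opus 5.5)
We derive both formulas from the spectral representation of $Y_j$ and Fubini's theorem. By Bochner's theorem and the Fourier convention fixed in Section 4, the covariance is
\[
\E_j[Y_j(s)Y_j(t)] = K_j(t-s) = (2\pi)^{-4}\int_{\R^4} e^{i\xi\cdot(t-s)} f_j(\xi)\,d\xi .
\]
Here $f_j\in L^1(\R^4)$, since $f_j(\xi)\lesssim(1+|\xi|)^{-2\nu-4}$, and $K_j$ is bounded and continuous.

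\textbf{Exchanging expectation and integrals.} We use the continuous modification from Lemma~\ref{lem:continuous-modification}. Then $X_k$ is a well-defined random variable, and $\E\int|\chi||Y_j|\,dt\le \|\chi\|_1 K_j(0)^{1/2}<\infty$. Writing $\overline{X_\ell}=\int \chi(s)e^{i\ell\cdot s}Y_j(s)\,ds$, which uses that $\chi$ and $Y_j$ are real, Fubini gives
\[
\Sigma_j(k,\ell)=\iint \chi(t)\chi(s)e^{-ik\cdot t}e^{i\ell\cdot s}K_j(t-s)\,dt\,ds .
\]
This is justified because the integrand is bounded by $\|K_j\|_\infty|\chi(t)\chi(s)|$, which is integrable, and $\E|Y(t)Y(s)|\le K_j(0)$.

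\textbf{Inserting the spectral integral.} We substitute the spectral formula for $K_j(t-s)$ and apply Fubini once more. This step is legitimate because $|\chi(t)\chi(s)|f_j(\xi)$ is integrable on $\R^4\times\R^4\times\R^4$. The $t$-integral gives $\int\chi(t)e^{-i(k-\xi)\cdot t}\,dt=\what\chi(k-\xi)$. The $s$-integral gives $\int\chi(s)e^{i(\ell-\xi)\cdot s}\,ds=\overline{\what\chi(\ell-\xi)}$, again because $\chi$ is real. Together these yield \eqref{eq:Sigma-formula}.

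For $\Pi_j$ the factor $e^{i\ell\cdot s}$ is replaced by $e^{-i\ell\cdot s}$. The $s$-integral then becomes $\int\chi(s)e^{-i(\ell+\xi)\cdot s}\,ds=\what\chi(\ell+\xi)$, which gives \eqref{eq:Pi-formula}.

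\textbf{The variance formula.} Setting $\ell=k$ in \eqref{eq:Sigma-formula} gives $(2\pi)^{-4}\int f_j(\xi)|\what\chi(k-\xi)|^2\,d\xi$. The substitution $\eta=k-\xi$ then turns this into \eqref{eq:vi-formula}.

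The only delicate point is keeping track of the sign conventions: the conjugation of $\what\chi$ and the $+\xi$ in the pseudo-covariance. Every interchange of integrals is covered by absolute integrability, since $f_j\in L^1$ and $\chi\in C_c^\infty$.
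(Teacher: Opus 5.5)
Your proposal is correct and takes essentially the same route as the paper. You pass to the continuous modification, use Fubini to write $\Sigma_j$ and $\Pi_j$ as double integrals of $\chi(t)\chi(s)K_j(t-s)$ against the exponentials, insert the spectral representation, identify the $t$- and $s$-integrals as $\what\chi(k-\xi)$, $\overline{\what\chi(\ell-\xi)}$ and $\what\chi(\ell+\xi)$, and finally set $\ell=k$ and substitute $\eta=k-\xi$. Your justification of both interchanges (via $\E|Y_j(t)Y_j(s)|\le K_j(0)$ and $f_j\in L^1$) is, if anything, slightly more explicit than the paper's, which only mentions the first.
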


\begin{proof}
Let $Y_j$ denote the original stationary field on $\R^4$, and let $\widetilde Y_j$
be a continuous modification whose law on $D$ is $\mathbf P_j$.
Define the covariance function of the stationary field by
\[
R_j(h):=\E\!\big[Y_j(t+h)Y_j(t)\big], \qquad h\in\R^4.
\]
By stationarity this does not depend on $t$, and since $\widetilde Y_j$ is a
modification of $Y_j$, we have
\[
\E\!\big[\widetilde Y_j(t)\widetilde Y_j(s)\big]=R_j(t-s),
\qquad t,s\in D.
\]
Because $f_j\in L^1(\R^4)$, Fourier inversion gives
\[
R_j(h)=(2\pi)^{-4}\int_{\R^4} e^{i\xi\cdot h}f_j(\xi)\,d\xi.
\]
For each sample path of $\widetilde Y_j$, the integral
\[
X_k(\widetilde Y_j)=\int_D \chi(t)e^{-ik\cdot t}\widetilde Y_j(t)\,dt
\]
is well defined, since $\chi$ is smooth with compact support in $D$ and
$\widetilde Y_j$ is continuous on $\overline D$.

Now
\begin{align*}
\Sigma_j(k,\ell)
&=\E_j\!\left[\left(\int_D \chi(t)e^{-ik\cdot t}\widetilde Y_j(t)\,dt\right)
\overline{\left(\int_D \chi(s)e^{-i\ell\cdot s}\widetilde Y_j(s)\,ds\right)}\right]
\\
&=\iint_{D\times D}
\chi(t)\chi(s)e^{-ik\cdot t}e^{i\ell\cdot s}
\,\E\!\big[\widetilde Y_j(t)\widetilde Y_j(s)\big]\,dt\,ds
\\
&=\iint_{D\times D}
\chi(t)\chi(s)e^{-ik\cdot t}e^{i\ell\cdot s}
R_j(t-s)\,dt\,ds
\\
&=(2\pi)^{-4}\int_{\R^4} f_j(\xi)
\left(\int_D \chi(t)e^{-i(k-\xi)\cdot t}\,dt\right)
\left(\int_D \chi(s)e^{i(\ell-\xi)\cdot s}\,ds\right)\,d\xi
\end{align*}
where in the penultimate step we interchange expectation and integration by Tonelli--Fubini, since the integrand is absolutely integrable over $(D\times D)$.
Since $\operatorname{supp}\chi \subset D$, 
\[
\int_D \chi(t)e^{-i(k-\xi)\cdot t}\,dt
=
\int_{\R^4} \chi(t)e^{-i(k-\xi)\cdot t}\,dt
=
\what\chi(k-\xi).
\]
Since $\chi$ is real-valued,
\[
\int_D \chi(s)e^{i(\ell-\xi)\cdot s}\,ds
=\overline{\int_D \chi(s)e^{-i(\ell-\xi)\cdot s}\,ds}
=\overline{\what\chi(\ell-\xi)}.
\]
This proves \eqref{eq:Sigma-formula}.

The proof of \eqref{eq:Pi-formula} is the same, except that there is no complex
conjugation on the second factor:
\begin{align*}
\Pi_j(k,\ell)
&=\E_j\!\left[\left(\int_D \chi(t)e^{-ik\cdot t}\widetilde Y_j(t)\,dt\right)
\left(\int_D \chi(s)e^{-i\ell\cdot s}\widetilde Y_j(s)\,ds\right)\right]
\\
&=\iint_{D\times D}
\chi(t)\chi(s)e^{-ik\cdot t}e^{-i\ell\cdot s}
R_j(t-s)\,dt\,ds
\\
&=(2\pi)^{-4}\int_{\R^4} f_j(\xi)
\left(\int_D \chi(t)e^{-i(k-\xi)\cdot t}\,dt\right)
\left(\int_D \chi(s)e^{-i(\ell+\xi)\cdot s}\,ds\right)\,d\xi
\\
&=(2\pi)^{-4}\int_{\R^4} f_j(\xi)\,\what\chi(k-\xi)\,\what\chi(\ell+\xi)\,d\xi.
\end{align*}

Finally, setting $\ell=k$ in \eqref{eq:Sigma-formula} gives
\[
v_j(k)=(2\pi)^{-4}\int_{\R^4} f_j(\xi)\,\abs{\what\chi(k-\xi)}^2\,d\xi,
\]
and the change of variables $\eta=k-\xi$ yields \eqref{eq:vi-formula}.
\end{proof}
\section{Diagonal asymptotics}
Let
\begin{equation} \label{eqn:h}
h(\xi):=f_2(\xi)-f_1(\xi).
\end{equation}

\begin{lemma}\label{lem:diagonal-asymptotics}
Define
\[
c_\chi:=(2\pi)^{-4}\int_{\R^4}\abs{\what\chi(\eta)}^2\,d\eta = \int_D \chi(t)^2\,dt >0.
\]
Then, as $\abs{k}\to\infty$ with $k\in\N^4$,
\begin{align}
v_j(k)&=c_\chi f_j(k)(1+o(1)),
\label{eq:vi-asymptotic}
\\
v_2(k)-v_1(k)&=c_\chi h(k)(1+o(1)).
\label{eq:diff-asymptotic}
\end{align}
Consequently, if
\begin{equation}\label{eq:delta-def}
\delta_k:=\frac{v_2(k)}{v_1(k)}-1,
\end{equation}
then
\begin{equation}\label{eq:delta-asymptotic}
\delta_k = p(\alpha_1^2-\alpha_2^2)\abs{k}^{-2}+o(\abs{k}^{-2}).
\end{equation}
In particular, $\abs{\delta_k}\asymp \abs{k}^{-2}$ for all sufficiently large $k$.
\end{lemma}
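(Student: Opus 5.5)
The plan is to start from the convolution formula \eqref{eq:vi-formula},
\[
v_j(k)=(2\pi)^{-4}\int_{\R^4} f_j(k-\eta)\abs{\what\chi(\eta)}^2\,d\eta,
\]
and split the $\eta$-integral into a near region $\abs{\eta}\le \abs{k}^{1/2}$ and a far region $\abs{\eta}>\abs{k}^{1/2}$. The identity $c_\chi=\int\chi^2$ is simply Plancherel, and $c_\chi>0$ because $\chi\not\equiv0$.

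In the near region we have $\abs{k-\eta}=\abs{k}(1+O(\abs{k}^{-1/2}))$. Since $f_j(\xi)=C_\nu m(\alpha_j^2+\abs{\xi}^2)^{-p}$ is smooth and regularly varying, this gives $f_j(k-\eta)=f_j(k)(1+O(\abs{k}^{-1/2}))$ uniformly there. In the far region we use only the crude bound $f_j\le C_\nu m\alpha_j^{-2p}$. Combining it with \eqref{eqn:rapid} for large $M$ shows that the contribution is $O(\abs{k}^{-M'})$ for any $M'$, which is $o(f_j(k))$ since $f_j(k)\asymp\abs{k}^{-2p}$. The tail $\int_{\abs\eta>\abs k^{1/2}}\abs{\what\chi}^2$ is likewise negligible. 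Together these give \eqref{eq:vi-asymptotic}.

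For \eqref{eq:diff-asymptotic}, the same decomposition is applied to $h=f_2-f_1$, which is the step needing most care because $h$ is a difference and we need a \emph{relative} error. First I will compute
\[
h(\xi)=C_\nu m\big[(\alpha_2^2+\abs\xi^2)^{-p}-(\alpha_1^2+\abs\xi^2)^{-p}\big]=C_\nu m\,p(\alpha_1^2-\alpha_2^2)\abs\xi^{-2p-2}(1+O(\abs\xi^{-2}))
\]
by the mean value theorem in the variable $\alpha^2$. In particular $h(k)\asymp\abs k^{-2p-2}$ with a fixed sign, and $h(k-\eta)=h(k)(1+O(\abs k^{-1/2}))$ on the near region. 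On the far region $\abs h$ is bounded, so its contribution is again $O(\abs k^{-M'})=o(\abs{h(k)})$ by rapid decay of $\what\chi$. Hence
\[
v_2(k)-v_1(k)=(2\pi)^{-4}\int h(k-\eta)\abs{\what\chi(\eta)}^2d\eta=c_\chi h(k)(1+o(1)).
\]
The main obstacle is the integrable singular behavior as $k-\eta$ ranges near $0$. This is handled because $f_j$ and $h$ are bounded on all of $\R^4$ (thanks to $\alpha_j>0$), so no singularity actually arises; only the polynomial-versus-Schwartz tradeoff matters.

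Finally I will divide the two asymptotics:
\[
\delta_k=\frac{v_2(k)-v_1(k)}{v_1(k)}=\frac{h(k)}{f_1(k)}(1+o(1)).
\]
Here $h(k)/f_1(k)=p(\alpha_1^2-\alpha_2^2)\abs k^{-2}(1+O(\abs k^{-2}))$, which gives \eqref{eq:delta-asymptotic}. Since $\alpha_1\ne\alpha_2$, the leading coefficient is nonzero, so $\abs{\delta_k}\asymp\abs k^{-2}$ for large $k$.
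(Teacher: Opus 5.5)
Your proof is correct. Its overall architecture matches the paper's: you start from the convolution formula \eqref{eq:vi-formula}, obtain separate asymptotics for $v_j(k)$ and $v_2(k)-v_1(k)$, expand $h$, and then divide. The analytic core is different, however.

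The paper never splits the $\eta$-integral. It shows that the ratios $f_j(k-\eta)/f_j(k)$ and $h(k-\eta)/h(k)$ tend to $1$ pointwise. It dominates them, uniformly in $k$, by $C(1+|\eta|)^{2p}$ and $C(1+|\eta|)^{2p+2}$ respectively, using the Peetre-type inequality $1+|k|\le(1+|k-\eta|)(1+|\eta|)$. It then invokes dominated convergence. That route is shorter but purely qualitative. It also requires global two-sided comparability $f_j\asymp(1+|\xi|)^{-2p}$ and a global upper bound $|h(\zeta)|\le C(1+|\zeta|)^{-2p-2}$.

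Your split at $|\eta|=|k|^{1/2}$ needs less. On the near region it uses a uniform relative estimate. On the far region it uses only boundedness of $f_j$ and $h$, with the Schwartz tail of $\widehat\chi$ absorbing everything else. In exchange for slightly more bookkeeping, you get an explicit rate: $v_j(k)=c_\chi f_j(k)(1+O(|k|^{-1/2}))$, the same for $v_2-v_1$, and hence $\delta_k=p(\alpha_1^2-\alpha_2^2)|k|^{-2}(1+O(|k|^{-1/2}))$. The paper only obtains $o(1)$ errors.

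Your mean-value representation is also cleaner than the paper's Taylor expansion. Writing $h(\xi)=C_\nu m\,p(\alpha_1^2-\alpha_2^2)(\theta_\xi+|\xi|^2)^{-p-1}$, with $\theta_\xi$ between $\alpha_1^2$ and $\alpha_2^2$, shows directly that $h$ has a fixed sign and that $|h(\xi)|\asymp(1+|\xi|)^{-2p-2}$ on all of $\R^4$. That is exactly the global bound the paper's domination step relies on.

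The singularity near $k-\eta=0$ that you flag is, as you yourself note, not an obstacle, since $f_j$ and $h$ are bounded.
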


\begin{proof}
Recall from \eqref{eq:vi-formula} that
\[
v_j(k)
=
(2\pi)^{-4}\int_{\R^4} f_j(k-\eta)\,|\widehat{\chi}(\eta)|^2\,d\eta.
\]
Therefore
\[
\frac{v_j(k)}{f_j(k)}
=
(2\pi)^{-4}\int_{\R^4}\frac{f_j(k-\eta)}{f_j(k)}\,|\widehat{\chi}(\eta)|^2\,d\eta.
\]
We will show that the integrand converges pointwise to $|\widehat{\chi}(\eta)|^2$
and is dominated by an integrable function independent of $k$.

For fixed $\eta\in\R^4$, since
\[
f_j(\xi)=C_\nu\,\sigma_j^2\alpha_j^{2\nu}(\alpha_j^2+|\xi|^2)^{-p},
\qquad p=\nu+2,
\]
we have
\[
\frac{f_j(k-\eta)}{f_j(k)}
=
\left(\frac{\alpha_j^2+|k|^2}{\alpha_j^2+|k-\eta|^2}\right)^p.
\]
If $\eta$ is fixed and $|k|\to\infty$, then
\[
\frac{\alpha_j^2+|k-\eta|^2}{\alpha_j^2+|k|^2}\to 1,
\]
and hence
\[
\frac{f_j(k-\eta)}{f_j(k)}\to 1.
\]

We now establish a uniform bound. Since
\[
f_j(\xi)\asymp (1+|\xi|)^{-2p},
\]
there exists a constant $C_0\ge 1$, depending only on $j$, such that
\[
C_0^{-1}(1+|\xi|)^{-2p}\le f_j(\xi)\le C_0(1+|\xi|)^{-2p}
\qquad\text{for all }\xi\in\R^4.
\]
Therefore
\[
\frac{f_j(k-\eta)}{f_j(k)}
\le
C_0^2\Bigl(\frac{1+|k|}{1+|k-\eta|}\Bigr)^{2p}.
\]
Using the triangle inequality,
\[
1+|k|\le 1+|k-\eta|+|\eta|
\le (1+|k-\eta|)(1+|\eta|),
\]
and hence
\[
\frac{1+|k|}{1+|k-\eta|}
\le 1+|\eta|.
\]
It follows that
\[
\frac{f_j(k-\eta)}{f_j(k)}
\le C(1+|\eta|)^{2p}
\]
for all $k,\eta$, with $C$ independent of $k,\eta$.
From \eqref{eqn:rapid}, for every $M>0$ there exists $C_M$ such that
\[
|\widehat{\chi}(\eta)|\le C_M(1+|\eta|)^{-M}.
\]
Choosing $M$ so large that
\[
(1+|\eta|)^{2p}|\widehat{\chi}(\eta)|^2\in L^1(\R^4),
\]
we may apply dominated convergence to obtain
\[
\frac{v_j(k)}{f_j(k)}
\to
(2\pi)^{-4}\int_{\R^4}|\widehat{\chi}(\eta)|^2\,d\eta
=:c_\chi.
\]
Thus
\[
v_j(k)=c_\chi f_j(k)(1+o(1)).
\]

Finally, by the Plancherel theorem \cite[p.15]{duo2024fourier},
\[
c_\chi=(2\pi)^{-4}\int_{\R^4}|\widehat{\chi}(\eta)|^2\,d\eta
=\int_{\R^4}|\chi(t)|^2\,dt.
\]
This proves \eqref{eq:vi-asymptotic}.

\medskip
\noindent
Next we give the proof of \eqref{eq:diff-asymptotic}.
Under the microergodic matching condition \eqref{eq:micro-match},
\[
f_j(\xi)=C_\nu m(\alpha_j^2+|\xi|^2)^{-p},
\]
so
\[
h(\xi):=f_2(\xi)-f_1(\xi)
=
C_\nu m\Big[(\alpha_2^2+|\xi|^2)^{-p}-(\alpha_1^2+|\xi|^2)^{-p}\Big].
\]

We first expand this quantity as $|\xi|\to\infty$. Set $r=|\xi|^2$, so that
\[
(\alpha_j^2+r)^{-p}=r^{-p}\Bigl(1+\frac{\alpha_j^2}{r}\Bigr)^{-p}.
\]
Using Taylor expansion,
\[
(\alpha_j^2+r)^{-p}
=
r^{-p}\left(1-p\frac{\alpha_j^2}{r}+O(r^{-2})\right)
=
r^{-p}-p\alpha_j^2 r^{-p-1}+O(r^{-p-2}).
\]
Subtracting the two expansions gives
\[
(\alpha_2^2+r)^{-p}-(\alpha_1^2+r)^{-p}
=
p(\alpha_1^2-\alpha_2^2)r^{-p-1}+O(r^{-p-2}).
\]
Thus
\begin{equation}\label{eq:h-expansion-detailed}
h(\xi)
=
C_\nu m\,p(\alpha_1^2-\alpha_2^2)|\xi|^{-2p-2}
+
O(|\xi|^{-2p-4}).
\end{equation}

Since $\alpha_1\ne\alpha_2$, there exist $R>0$ and constants $c,C>0$ such that
\[
c(1+|\xi|)^{-2p-2}\le |h(\xi)|\le C(1+|\xi|)^{-2p-2}
\qquad\text{for }|\xi|\ge R.
\]
By enlarging $C$ if necessary, we may assume the upper bound holds for all $\xi$.

Now recall from \eqref{eq:vi-formula} that
\[
v_2(k)-v_1(k)
=
(2\pi)^{-4}\int_{\R^4}h(k-\eta)\,|\widehat{\chi}(\eta)|^2\,d\eta.
\]
Therefore, 
\[
\frac{v_2(k)-v_1(k)}{h(k)}
=
(2\pi)^{-4}\int_{\R^4}\frac{h(k-\eta)}{h(k)}\,|\widehat{\chi}(\eta)|^2\,d\eta.
\]
For fixed \(\eta\) we have 
\[
\frac{h(k-\eta)}{h(k)}\to 1
\qquad\text{as }|k|\to\infty.
\]
Since
\[
|h(\zeta)|\le C(1+|\zeta|)^{-2p-2},
\]
while for large $|k|$,
\[
|h(k)|\ge c(1+|k|)^{-2p-2},
\]
it follows that for all sufficiently large $k$,
\[
\frac{|h(k-\eta)|}{|h(k)|}
\le
C'\Bigl(\frac{1+|k|}{1+|k-\eta|}\Bigr)^{2p+2}.
\]
Using again
\[
1+|k|\le (1+|k-\eta|)(1+|\eta|),
\]
we conclude that
\[
\frac{|h(k-\eta)|}{|h(k)|}
\le
C'(1+|\eta|)^{2p+2}
\]
for all sufficiently large \(k\), uniformly in \(\eta\).
Since \(\widehat{\chi}\) is Schwartz,
\[
(1+|\eta|)^{2p+2}|\widehat{\chi}(\eta)|^2\in L^1(\R^4).
\]
Thus dominated convergence applies and yields
\[
\frac{v_2(k)-v_1(k)}{h(k)}
\to
(2\pi)^{-4}\int_{\R^4}|\widehat{\chi}(\eta)|^2\,d\eta
=
c_\chi.
\]
Equivalently,
\[
v_2(k)-v_1(k)=c_\chi h(k)(1+o(1)).
\]
This proves \eqref{eq:diff-asymptotic}.

By \eqref{eq:diff-asymptotic} and \eqref{eq:vi-asymptotic}, we may write
\[
v_2(k)-v_1(k)=c_\chi h(k)+o(h(k)),
\qquad
v_1(k)=c_\chi f_1(k)+o(f_1(k)).
\]
Therefore
\[
\delta_k
=
\frac{v_2(k)-v_1(k)}{v_1(k)}
=
\frac{c_\chi h(k)+o(h(k))}{c_\chi f_1(k)+o(f_1(k))}.
\]
Since \(f_1(k)>0\) for all \(k\), and
\[
c_\chi f_1(k)+o(f_1(k))\sim c_\chi f_1(k),
\]
it follows that
\[
\frac{1}{c_\chi f_1(k)+o(f_1(k))}
=
\frac{1}{c_\chi f_1(k)}(1+o(1)).
\]
Hence
\[
\delta_k
=
\bigl(c_\chi h(k)+o(h(k))\bigr)\frac{1}{c_\chi f_1(k)}(1+o(1))
=
\frac{h(k)}{f_1(k)}+o\!\left(\frac{h(k)}{f_1(k)}\right).
\]
Equivalently,
\[
\delta_k
=
\frac{h(k)}{f_1(k)}\bigl(1+o(1)\bigr).
\]
Since, by the expansion of \(h\),
\[
\frac{h(k)}{f_1(k)}\asymp |k|^{-2},
\]
we obtain the sharper form
\[
\delta_k
=
\frac{h(k)}{f_1(k)}+o(|k|^{-2}).
\]
Now
\[
\frac{h(k)}{f_1(k)}
=
\frac{f_2(k)}{f_1(k)}-1
=
\left(\frac{\alpha_1^2+|k|^2}{\alpha_2^2+|k|^2}\right)^p-1.
\]
Writing
\[
u_k:=\frac{\alpha_1^2-\alpha_2^2}{\alpha_2^2+|k|^2},
\]
we have \(u_k=O(|k|^{-2})\), and therefore
\[
\frac{f_2(k)}{f_1(k)}
=
(1+u_k)^p
=
1+p\,u_k+O(u_k^2)
=
1+p(\alpha_1^2-\alpha_2^2)|k|^{-2}+O(|k|^{-4}).
\]
Thus
\[
\frac{h(k)}{f_1(k)}
=
p(\alpha_1^2-\alpha_2^2)|k|^{-2}+O(|k|^{-4}),
\]
and consequently
\[
\delta_k
=
p(\alpha_1^2-\alpha_2^2)|k|^{-2}+o(|k|^{-2}).
\]
This shows \eqref{eq:delta-asymptotic} and completes the proof.
\end{proof}
\begin{lemma}\label{lem:log-growth}
There exists $K_0\in\N$ such that for
\[
\Lambda_N:=\{k\in\N^4: K_0\le \norm{k}_\infty\le N\}
\qquad\text{and}\qquad
L_N:=\sum_{k\in\Lambda_N} \delta_k^2,
\]
one has
\begin{equation}\label{eq:LN-log}
L_N\asymp \log N\qquad (N\to\infty).
\end{equation}
In particular $L_N\to\infty$.
\end{lemma}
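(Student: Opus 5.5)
We derive the claim from Lemma~\ref{lem:diagonal-asymptotics} together with a lattice-point count. That lemma gives
\[
\delta_k = p(\alpha_1^2-\alpha_2^2)\abs{k}^{-2}+o(\abs{k}^{-2}),
\]
with $p(\alpha_1^2-\alpha_2^2)\neq 0$ because $\alpha_1\neq\alpha_2$. Hence we can choose $K_0$ so large that for all $k\in\N^4$ with $\norm{k}_\infty\ge K_0$ (which forces $\abs{k}\ge K_0$) the remainder is at most half the main term:
\[
\tfrac12 c_0\abs{k}^{-2}\le \abs{\delta_k}\le \tfrac32 c_0\abs{k}^{-2},
\qquad c_0:=p\abs{\alpha_1^2-\alpha_2^2}>0 .
\]
Squaring, $\delta_k^2\asymp \abs{k}^{-4}$ uniformly on $\Lambda_N$. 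It therefore suffices to show
\[
\sum_{k\in\Lambda_N}\abs{k}^{-4}\asymp \log N .
\]

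Since $\norm{k}_\infty\le\abs{k}\le 2\norm{k}_\infty$ in $\R^4$, we may replace $\abs{k}$ by $\norm{k}_\infty$ at the cost of constants. We then group the sum by shells. For $n\ge 1$, the number of $k\in\N^4$ with $\norm{k}_\infty=n$ is $(n+1)^4-n^4$ if $\N$ includes $0$, or $n^4-(n-1)^4$ otherwise. In either convention this count lies between $c n^3$ and $C n^3$ for $n\ge1$. Consequently
\[
\sum_{k\in\Lambda_N}\norm{k}_\infty^{-4}=\sum_{n=K_0}^N \#\{k:\norm{k}_\infty=n\}\,n^{-4}\asymp \sum_{n=K_0}^N n^{-1},
\]
and the harmonic sum on the right equals $\log N-\log K_0+O(1)\asymp\log N$ for $N\ge K_0^2$, say. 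This gives \eqref{eq:LN-log}, and $L_N\to\infty$ follows immediately.

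The only point needing care is the first step: the lower bound on $\abs{\delta_k}$ must be uniform in $k$, not merely asymptotic. This is exactly why $K_0$ is chosen via the $o(\abs{k}^{-2})$ remainder, and it uses $\alpha_1\ne\alpha_2$ in an essential way. Once that is in place, everything else is an elementary counting estimate.
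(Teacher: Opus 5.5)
Your proposal is correct and follows the paper's proof essentially step for step: the uniform two-sided bound $\abs{\delta_k}\asymp\abs{k}^{-2}$ on $\Lambda_N$ from Lemma~\ref{lem:diagonal-asymptotics} (with $K_0$ enlarged), comparison of $\abs{k}$ with $\norm{k}_\infty$, the shell count $\asymp n^3$, and the harmonic sum. Your explicit handling of the uniform lower bound via the $o(\abs{k}^{-2})$ remainder, of the two conventions for $\N$, and of the harmonic sum with $K_0$ fixed only spells out details the paper leaves implicit.
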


\begin{proof}
By Lemma~\ref{lem:diagonal-asymptotics}, after increasing $K_0$ if necessary,
\[
\abs{\delta_k}\asymp \norm{k}_\infty^{-2}\qquad\text{for all }k\in\Lambda_N.
\]
Hence
\[
L_N\asymp \sum_{K_0\le \norm{k}_\infty\le N} \norm{k}_\infty^{-4}.
\]
Now
\[
\#\{k\in\N^4:\norm{k}_\infty=n\}=n^4-(n-1)^4\asymp n^3.
\]
Therefore
\[
L_N\asymp \sum_{n=K_0}^N n^3\,n^{-4} \asymp \sum_{n=K_0}^N \frac1n \asymp \log N
\]
 and we are done.
\end{proof}

\section{Off-diagonal decay}
Define the rapidly decreasing nonnegative function
\[
a(\eta):=(1+\abs{\eta})^p\abs{\what\chi(\eta)},\qquad \eta\in\R^4.
\]
For $q,r\in\R^4$, define
\begin{align}
A(q)&:=\int_{\R^4} a(\eta)a(\eta+q)\,d\eta,
\label{eq:A-def}
\\
B(r)&:=\int_{\R^4} a(\eta)a(r-\eta)\,d\eta.
\label{eq:B-def}
\end{align}

\begin{lemma}\label{lem:A-B-rapid}
For every $M>0$ there exists $C_M<\infty$ such that
\begin{equation}\label{eq:A-B-rapid}
A(q)+B(q)\le C_M(1+\abs{q})^{-M}\qquad\text{for all }q\in\R^4.
\end{equation}
In particular, the restrictions of $A^2$ and $B^2$ to $\Z^4$ belong to $\ell^1(\Z^4)$.
\end{lemma}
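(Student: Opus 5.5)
The plan rests on one observation: $a$ is itself rapidly decreasing. By \eqref{eqn:rapid} with $M$ replaced by $M+p$, we get
\[
a(\eta)=(1+|\eta|)^p|\what\chi(\eta)|\le C_{M+p}(1+|\eta|)^{-M}
\]
for every $M$. Both $A$ and $B$ are then (up to reflection) convolution-type integrals of two rapidly decreasing nonnegative functions. Indeed, $B=a*a$, and $A(q)=\int a(\eta)a(\eta+q)\,d\eta=(\tilde a*a)(q)$ with $\tilde a(\eta)=a(-\eta)$. It therefore suffices to prove a single estimate: if $g_1,g_2\ge0$ satisfy $g_i(\eta)\le C_M(1+|\eta|)^{-M}$ for all $M$, then the same holds for $g_1*g_2$.

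For this estimate I would use Peetre's inequality. From
\[
1+|q|\le (1+|\eta|)(1+|q-\eta|)
\]
(the same triangle-inequality manipulation used in the proof of Lemma~\ref{lem:diagonal-asymptotics}), we get
\[
(1+|q|)^M\,g_1(\eta)g_2(q-\eta)\le (1+|\eta|)^M g_1(\eta)\,(1+|q-\eta|)^M g_2(q-\eta).
\]
Bound the second factor by $\sup_\zeta (1+|\zeta|)^Mg_2(\zeta)<\infty$ and integrate the first factor. This gives
\[
(1+|q|)^M(g_1*g_2)(q)\le \norm{(1+|\cdot|)^Mg_1}_{L^1}\,\sup_\zeta(1+|\zeta|)^Mg_2(\zeta)<\infty.
\]
The $L^1$ norm is finite because $(1+|\eta|)^Mg_1\le C(1+|\eta|)^{-5}$, which is integrable on $\R^4$. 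Applying this with $(g_1,g_2)=(\tilde a,a)$ for $A$ and $(a,a)$ for $B$, and adding the two bounds, yields \eqref{eq:A-B-rapid}. This also shows along the way that $A$ and $B$ are finite everywhere.

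For the summability claim, take $M=5$. Then $A(q)^2+B(q)^2\le C(1+|q|)^{-10}$. Counting lattice points in shells $\{q\in\Z^4: n\le|q|<n+1\}$, of which there are $O(n^3)$, gives
\[
\sum_{q\in\Z^4}(1+|q|)^{-10}\lesssim\sum_n n^3(1+n)^{-10}<\infty.
\]
Hence the restrictions of $A^2$ and $B^2$ to $\Z^4$ lie in $\ell^1(\Z^4)$; any $M>2$ suffices.

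I do not expect a genuine obstacle. The only point needing care is that the decay of $a$ is obtained by spending $p$ extra powers of decay of $\what\chi$, and that the weight $(1+|q|)^M$ has to be split between the two factors via Peetre's inequality rather than by a crude region split. The case split $|\eta|\ge|q|/2$ versus $|q-\eta|\ge|q|/2$ would work equally well as a fallback.
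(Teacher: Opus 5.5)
Your proof is correct and has the same structure as the paper's: you note that $a$ is rapidly decreasing, write $A$ and $B$ as convolutions of rapidly decreasing functions, show that such convolutions are rapidly decreasing, and then count lattice points in shells. The only difference is in the convolution step. You split the weight $(1+|q|)^M$ using Peetre's inequality, whereas the paper splits the domain of integration into $\{|\eta|\ge |q|/2\}$ and $\{|\eta|<|q|/2\}$, which is your stated fallback. Both arguments are standard and equally short. Your remark that any $M>2$ suffices for the $\ell^1$ claim is slightly sharper than the paper's choice of $M>4$.
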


\begin{proof}
It suffices to show that the convolution of two rapidly decreasing functions is rapidly decreasing. Fix $M>0$ and choose $L>M+4$. Since $a$ is rapidly decreasing,
\[
a(\eta)\le C_L(1+|\eta|)^{-L}.
\]
Hence
\[
A(q)\le C_L^2 \int_{\R^4}(1+|\eta|)^{-L}(1+|\eta+q|)^{-L}\,d\eta
=: C_L^2\, I_L(q),
\]
where $L>4$ ensures integrability. Set
\[
E_1:=\{\eta:|\eta|\ge |q|/2\},
\qquad
E_2:=\{\eta:|\eta|<|q|/2\}.
\]
Then $\R^4=E_1\sqcup E_2$, and on $E_2$ we have $|\eta+q|\ge |q|-|\eta|>|q|/2$. Thus
\[
I_L(q)=\int_{E_1}\cdots\,d\eta+\int_{E_2}\cdots\,d\eta=:I_{L,1}(q)+I_{L,2}(q).
\]
On $E_1$,
\[
(1+|\eta|)^{-L}\le (1+|q|/2)^{-L},
\]
so
\[
I_{L,1}(q)
\le
(1+|q|/2)^{-L}\int_{\R^4}(1+|\eta+q|)^{-L}\,d\eta
\le C_L(1+|q|)^{-L}.
\]
On $E_2$, since $|\eta+q|>|q|/2$,
\[
(1+|\eta+q|)^{-L}\le (1+|q|/2)^{-L},
\]
and hence
\[
I_{L,2}(q)
\le
(1+|q|/2)^{-L}\int_{\R^4}(1+|\eta|)^{-L}\,d\eta
\le C_L(1+|q|)^{-L}.
\]
Therefore
\[
I_L(q)\le C_L(1+|q|)^{-L},
\]
and consequently
\[
A(q)\le C(1+|q|)^{-L}\le C(1+|q|)^{-M}.
\]
The proof for $B$ is identical. Since $M$ is arbitrary, \eqref{eq:A-B-rapid} holds. 
For \(q\in \mathbb Z^4\), 
\[
A(q)^2 \le C_M^2 (1+|q|)^{-2M},
\qquad
B(q)^2 \le C_M^2 (1+|q|)^{-2M}.
\]
Hence to show that the restrictions of $A^2$ and $B^2$ to $\Z^4$ belong to $\ell^1(\Z^4)$, it suffices to show that
\[
\sum_{q\in \mathbb Z^4} (1+|q|)^{-2M}<\infty.
\]
For \(S_n=\{q\in\mathbb Z^4:n\le |q|<n+1\}\),
\[
\#S_n
=
\#(B_{n+1}\cap\mathbb Z^4)-\#(B_n\cap\mathbb Z^4),
\]
where \(B_r=\{x\in\mathbb R^4:|x|\le r\}\).
Since \(B_r\subset[-r,r]^4\),
\[
\#(B_r\cap\mathbb Z^4)\le (2r+1)^4\lesssim r^4.
\]
Hence
\[
\#S_n\lesssim (n+1)^4-n^4\lesssim n^3 
\] 
and thus \[
\sum_{q\in \mathbb Z^4}(1+|q|)^{-2M}
\le
\sum_{n=0}^\infty \#S_n (1+n)^{-2M}
\lesssim
\sum_{n=0}^\infty (1+n)^{3-2M}.
\]
This converges whenever \(2M>4\); thus taking \(M>4\) yields
\[
A^2|_{\mathbb Z^4},\,B^2|_{\mathbb Z^4}\in \ell^1(\mathbb Z^4).
\]
This completes the proof.
\end{proof}
\begin{lemma}\label{lem:off-diagonal-bounds}
After increasing $K_0$ from \eqref{eqn:Lambda} if necessary, there exists a constant $C<\infty$ such that for all $k,\ell\in\N^4$ with $\min\{\norm{k}_\infty,\norm{\ell}_\infty\}\ge K_0$,
\begin{align}
\abs{\Sigma_j(k,\ell)} &\le C\,A(\ell-k)\,\sqrt{v_j(k)v_j(\ell)},
\label{eq:Sigma-bound}
\\
\abs{\Pi_j(k,\ell)} &\le C\,B(k+\ell)\,\sqrt{v_j(k)v_j(\ell)}.
\label{eq:Pi-bound}
\end{align}
\end{lemma}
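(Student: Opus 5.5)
The plan is to start from the integral formulas of Lemma~\ref{lem:cov-formulas} and bound the spectral density by a product of square roots evaluated near $k$ and near $\ell$. The key pointwise input is the Peetre-type inequality already used in the proof of Lemma~\ref{lem:diagonal-asymptotics}: since $f_j(\xi)\asymp (1+|\xi|)^{-2p}$ and $1+|k|\le (1+|k-\xi|)(1+|\xi|)$, we have
\[
\sqrt{f_j(\xi)}\le C\sqrt{f_j(k)}\,(1+|k-\xi|)^{p}
\qquad\text{for all }\xi,k\in\R^4 .
\]
The exponent $p$ here is exactly the one built into $a(\eta)=(1+|\eta|)^p|\widehat\chi(\eta)|$.

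For \eqref{eq:Sigma-bound}, write $f_j(\xi)=\sqrt{f_j(\xi)}\sqrt{f_j(\xi)}$ in \eqref{eq:Sigma-formula}. Bound the first factor using $k$ and the second using $\ell$:
\[
|\Sigma_j(k,\ell)|\le C\sqrt{f_j(k)f_j(\ell)}\int (1+|k-\xi|)^p|\widehat\chi(k-\xi)|\,(1+|\ell-\xi|)^p|\widehat\chi(\ell-\xi)|\,d\xi .
\]
Substituting $\eta=k-\xi$ turns the second argument into $\ell-\xi=\eta+(\ell-k)$, so the integral is exactly $A(\ell-k)$. Then Lemma~\ref{lem:diagonal-asymptotics} gives $v_j(k)\ge \tfrac12 c_\chi f_j(k)$ once $\|k\|_\infty\ge K_0$ with $K_0$ enlarged. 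Hence $\sqrt{f_j(k)f_j(\ell)}\le C\sqrt{v_j(k)v_j(\ell)}$ whenever both $\|k\|_\infty,\|\ell\|_\infty\ge K_0$, which proves \eqref{eq:Sigma-bound}.

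For \eqref{eq:Pi-bound}, use the same splitting in \eqref{eq:Pi-formula}. The second factor now involves $\widehat\chi(\ell+\xi)$, so bound $\sqrt{f_j(\xi)}$ relative to $-\ell$. By radial symmetry, $f_j(-\ell)=f_j(\ell)$, and the Peetre bound gives $\sqrt{f_j(\xi)}\le C\sqrt{f_j(\ell)}(1+|\ell+\xi|)^p$. The integrand is then bounded by $a(k-\xi)\,a(\ell+\xi)$. With $\eta=k-\xi$ we get $\ell+\xi=(k+\ell)-\eta$, so the integral equals $B(k+\ell)$. The same comparison of $f_j$ with $v_j$ finishes the argument.

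I do not expect a serious obstacle, since the argument is just two pointwise inequalities. The one point needing care is the lower bound $v_j(k)\gtrsim f_j(k)$. It is only asymptotic from Lemma~\ref{lem:diagonal-asymptotics}, which is why $K_0$ may need to increase. One could avoid this by observing that $v_j(k)>0$ and is continuous, but the asymptotic route is cleanest. The other point is checking that the constant in the Peetre bound is uniform in $j$ and in $k,\ell,\xi$, which holds because $f_j$ is two-sided comparable to $(1+|\xi|)^{-2p}$ with constants depending only on the model parameters.
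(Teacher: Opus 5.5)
Your proposal is correct and follows essentially the same route as the paper. The paper bounds $f_j(k-\eta)$ by the minimum of the two Peetre-type bounds, taken relative to $k$ and to $\ell$ (or to $-\ell$ for $\Pi_j$, using radial symmetry), and then by their geometric mean; this is exactly your splitting $f_j=\sqrt{f_j}\,\sqrt{f_j}$. The paper then identifies the integral as $A(\ell-k)$ or $B(k+\ell)$ and uses $v_j\asymp f_j$ from Lemma~\ref{lem:diagonal-asymptotics} to finish, as you do.
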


\begin{proof}
We prove \eqref{eq:Sigma-bound}; the proof of \eqref{eq:Pi-bound} is analogous.
By \eqref{eq:Sigma-formula}, after the change of variable $\eta=k-\xi$,
\[
\Sigma_j(k,\ell)
=(2\pi)^{-4}\int_{\R^4} f_j(k-\eta)\,\widehat{\chi}(\eta)\,
\overline{\widehat{\chi}(\eta+\ell-k)}\,d\eta .
\]

Since $f_j(\xi)\asymp (1+|\xi|)^{-2p}$, we have
\[
f_j(k-\eta)\le C(1+|\eta|)^{2p} f_j(k)
\]
and also, since $k-\eta=\ell-(\eta+\ell-k)$,
\[
f_j(k-\eta)\le C(1+|\eta+\ell-k|)^{2p} f_j(\ell).
\]
Therefore
\begin{align*}
f_j(k-\eta)
&\le C\min\Big\{(1+|\eta|)^{2p}f_j(k),\,
(1+|\eta+\ell-k|)^{2p}f_j(\ell)\Big\}
\\
&\le C(1+|\eta|)^p(1+|\eta+\ell-k|)^p\sqrt{f_j(k)f_j(\ell)} .
\end{align*}
Hence
\[
|\Sigma_j(k,\ell)|
\le
C\sqrt{f_j(k)f_j(\ell)}
\int_{\R^4} a(\eta)a(\eta+\ell-k)\,d\eta
=
C A(\ell-k)\sqrt{f_j(k)f_j(\ell)} .
\]
By Lemma~\ref{lem:diagonal-asymptotics}, $v_j(k)\asymp f_j(k)$ for $k\in\N^4$
with $\norm{k}_\infty\ge K_0$, so \eqref{eq:Sigma-bound} follows.

For \eqref{eq:Pi-bound}, start from
\[
\Pi_j(k,\ell)
=(2\pi)^{-4}\int_{\R^4} f_j(k-\eta)\,\widehat{\chi}(\eta)\,
\widehat{\chi}(k+\ell-\eta)\,d\eta .
\]
The same estimate as above gives
\[
f_j(k-\eta)
\le
C(1+|\eta|)^p(1+|k+\ell-\eta|)^p\sqrt{f_j(k)f_j(\ell)},
\]
because $f_j$ is radial and $k-\eta=-\ell+(k+\ell-\eta)$.
Therefore
\[
|\Pi_j(k,\ell)|
\le
C B(k+\ell)\sqrt{f_j(k)f_j(\ell)}
\le
C B(k+\ell)\sqrt{v_j(k)v_j(\ell)},
\]
which completes the proof.
\end{proof}

\section{Separation of models via $T_N$}
Recall the constant $K_0$ from \eqref{eqn:Lambda} and 
for $N\ge K_0$ define
\begin{equation}\label{eq:TN-def}
T_N:=\frac1{L_N}\sum_{k\in\Lambda_N} \delta_k\left(\frac{\abs{X_k}^2}{v_1(k)}-1\right).
\end{equation}

\begin{lemma}\label{lem:means}
For every $N\ge K_0$,
\[
\E_1[T_N]=0,
\qquad
\E_2[T_N]=1.
\]
\end{lemma}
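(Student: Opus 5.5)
The plan is to compute the expectations termwise by linearity, since $T_N$ is a finite linear combination of the variables $|X_k|^2$ with deterministic coefficients. The normalizing constant $L_N$ and the weights $\delta_k$ are deterministic. Moreover $\Lambda_N$ is a finite set, so no interchange-of-limits issue arises. We also need $L_N>0$, so that $T_N$ is well defined. This holds for $N\ge K_0$ because, by Lemma~\ref{lem:diagonal-asymptotics} and the choice of $K_0$ in Lemma~\ref{lem:log-growth}, $|\delta_k|\asymp|k|^{-2}>0$ on $\Lambda_N$, and $\Lambda_N\neq\emptyset$.

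The first step is to check that each $|X_k|^2$ is integrable, with $\E_j|X_k|^2=v_j(k)$. This is exactly the definition $v_j(k)=\Sigma_j(k,k)$, together with the formula of Lemma~\ref{lem:cov-formulas}. That formula is finite, since $f_j\in L^1$ and $\widehat\chi$ is bounded. We also need $v_1(k)>0$, so that the division in $T_N$ is legitimate. This follows from \eqref{eq:vi-formula}: the integrand $f_1(k-\eta)|\widehat\chi(\eta)|^2$ is nonnegative, and it is not identically zero because $f_1>0$ everywhere and $\chi\neq0$.

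With these facts in hand, linearity gives
\[
\E_j[T_N]=\frac1{L_N}\sum_{k\in\Lambda_N}\delta_k\left(\frac{v_j(k)}{v_1(k)}-1\right).
\]
For $j=1$ every summand vanishes, so $\E_1[T_N]=0$. For $j=2$ each bracket equals $\delta_k$ by the definition \eqref{eq:delta-def}. The sum is therefore $\sum_{k\in\Lambda_N}\delta_k^2=L_N$, which gives $\E_2[T_N]=1$.

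There is no real obstacle here. The only point needing care is the well-definedness of $T_N$, namely $v_1(k)>0$ and $L_N>0$, both addressed above.
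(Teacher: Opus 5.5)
Your proposal is correct and follows the paper's proof: you apply linearity with $\E_j|X_k|^2=v_j(k)$, so each bracket has mean $0$ under $\mathbf P_1$ and mean $\delta_k$ under $\mathbf P_2$, which gives $\E_2[T_N]=L_N^{-1}\sum_{k\in\Lambda_N}\delta_k^2=1$. Your extra checks that $v_1(k)>0$ and $L_N>0$ are sound; the paper leaves these implicit.
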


\begin{proof}
Under $\mathbf P_1$, $\E_1\abs{X_k}^2=v_1(k)$, so $\E_1[T_N]=0$.
Under $\mathbf P_2$,
\[
\E_2\left(\frac{\abs{X_k}^2}{v_1(k)}-1\right)=\frac{v_2(k)}{v_1(k)}-1=\delta_k.
\]
Hence
\[
\E_2[T_N]=\frac1{L_N}\sum_{k\in\Lambda_N}\delta_k^2=1
\]
and we are done.
\end{proof}

\begin{lemma}\label{lem:complex-wick}
Let $Z,W$ be centered complex Gaussian random variables. Then
\[
\operatorname{Cov}(\abs{Z}^2,\abs{W}^2)=\abs{\E[Z\overline W]}^2+\abs{\E[ZW]}^2.
\]
\end{lemma}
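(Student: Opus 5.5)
The plan is to prove the identity by Isserlis' (Wick's) theorem, applied to the real and imaginary parts. Here "centered complex Gaussian" means that $(\operatorname{Re}Z,\operatorname{Im}Z,\operatorname{Re}W,\operatorname{Im}W)$ is a jointly Gaussian, mean-zero real vector; this is exactly the situation for the $X_k$, which are linear functionals of one real Gaussian field. We do not assume circularity, which is why the pseudo-covariance term $\abs{\E[ZW]}^2$ appears.

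The key steps are as follows.

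First, write $\abs{Z}^2=Z\overline Z$ and $\abs{W}^2=W\overline W$. The four variables $Z,\overline Z,W,\overline W$ are complex linear combinations of a centered real Gaussian vector.

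Second, invoke Isserlis' formula. For centered jointly Gaussian real variables,
\[
\E[ABCD]=\E[AB]\E[CD]+\E[AC]\E[BD]+\E[AD]\E[BC].
\]
By multilinearity this extends verbatim to complex linear combinations of such variables. Applying it with $A=Z$, $B=\overline Z$, $C=W$, $D=\overline W$ gives
\[
\E[Z\overline Z W\overline W]=\E[Z\overline Z]\,\E[W\overline W]+\E[ZW]\,\E[\overline Z\,\overline W]+\E[Z\overline W]\,\E[\overline Z W].
\]

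Third, subtract $\E\abs{Z}^2\,\E\abs{W}^2$, which removes the first product. Then use $\E[\overline Z\,\overline W]=\overline{\E[ZW]}$ and $\E[\overline Z W]=\overline{\E[Z\overline W]}$. The two remaining terms become $\abs{\E[ZW]}^2$ and $\abs{\E[Z\overline W]}^2$, which is the claim.

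The only point needing care is the extension of Isserlis' theorem to complex combinations. This follows by expanding each factor into its real and imaginary parts, applying the real four-variable formula to each of the sixteen resulting terms, and regrouping by bilinearity of the pairings. If one prefers not to cite Isserlis, the real formula itself can be checked by differentiating the characteristic function $\exp(-\tfrac12 u^\top\Sigma u)$ four times at $u=0$. No step is a genuine obstacle; the bookkeeping of conjugates is the only place one could slip.
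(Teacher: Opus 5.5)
Your proposal is correct and follows the paper's proof essentially verbatim: Isserlis' formula applied to $(Z,\overline Z,W,\overline W)$, subtraction of $\E\abs{Z}^2\,\E\abs{W}^2$, and conjugation of the two remaining pairings. Your extra remarks spell out two points the paper leaves implicit: that $Z,W$ must be jointly Gaussian (true for the $X_k$ as linear functionals of one real field), and that the real four-variable formula extends to complex linear combinations by multilinearity.
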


\begin{proof}
This follows from Wick's (Isserlis') formula for centered Gaussian variables (see, \textit{e.g.}, \cite[Theorem~1.28]{Janson1997}), which asserts that for centered Gaussian random variables $X_1,\dots,X_4$,
\[
\E[X_1X_2X_3X_4]
= \E[X_1X_2]\E[X_3X_4]
+ \E[X_1X_3]\E[X_2X_4]
+ \E[X_1X_4]\E[X_2X_3].
\]
Applying this with $(X_1,X_2,X_3,X_4)=(Z,\overline Z,W,\overline W)$ yields
\[
\E[\abs{Z}^2\abs{W}^2]
= \E[Z\overline Z]\E[W\overline W]
+ \E[Z\overline W]\E[\overline Z W]
+ \E[ZW]\E[\overline Z\,\overline W].
\]
Subtracting $\E\abs{Z}^2\,\E\abs{W}^2$ gives the claim. 
\end{proof}
\begin{proposition}\label{prop:variance}
There exists $C<\infty$ such that for $j=1,2$ and all $N\ge K_0$,
\begin{equation}\label{eq:variance-bound}
\operatorname{Var}_j(T_N)\le \frac{C}{L_N}.
\end{equation}
Consequently, along any sequence $N_s\to\infty$ with $\sum_s L_{N_s}^{-1}<\infty$, one has
\[
T_{N_s}-\E_j[T_{N_s}]\longrightarrow 0
\qquad \mathbf P_j\text{-almost surely.}
\]
\end{proposition}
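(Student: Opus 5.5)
Write $T_N = L_N^{-1}\sum_{k\in\Lambda_N}\delta_k\,(|X_k|^2/v_1(k)-1)$. Subtracting constants does not change the variance, so
\[
\Var_j(T_N)=\frac{1}{L_N^2}\sum_{k,\ell\in\Lambda_N}\frac{\delta_k\delta_\ell}{v_1(k)v_1(\ell)}\,\mathrm{Cov}_j\bigl(|X_k|^2,|X_\ell|^2\bigr).
\]
By Lemma~\ref{lem:complex-wick}, each covariance equals $|\Sigma_j(k,\ell)|^2+|\Pi_j(k,\ell)|^2$. Lemma~\ref{lem:off-diagonal-bounds} (with $\min\{\|k\|_\infty,\|\ell\|_\infty\}\ge K_0$ on $\Lambda_N$) then gives
\[
\bigl|\mathrm{Cov}_j(|X_k|^2,|X_\ell|^2)\bigr|\le C\bigl(A(\ell-k)^2+B(k+\ell)^2\bigr)v_j(k)v_j(\ell).
\]
By Lemma~\ref{lem:diagonal-asymptotics}, $v_j(k)/v_1(k)=1+\delta_k$ is bounded on $\Lambda_N$ for $j=1,2$, after enlarging $K_0$ if needed. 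Therefore
\[
\Var_j(T_N)\le \frac{C}{L_N^2}\sum_{k,\ell\in\Lambda_N}|\delta_k||\delta_\ell|\bigl(A(\ell-k)^2+B(k+\ell)^2\bigr).
\]

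The main step is to bound this double sum by $C L_N$, which is a Schur/Young-type estimate. Use $|\delta_k||\delta_\ell|\le\tfrac12(\delta_k^2+\delta_\ell^2)$, which is symmetric in $k,\ell$. For the $A$ term this gives
\[
\sum_{k,\ell}|\delta_k\delta_\ell|A(\ell-k)^2\le \sum_{k\in\Lambda_N}\delta_k^2\sum_{q\in\Z^4}A(q)^2=\|A^2\|_{\ell^1(\Z^4)}L_N,
\]
which is finite by Lemma~\ref{lem:A-B-rapid}. The $B$ term is handled the same way: for fixed $k$, the map $\ell\mapsto k+\ell$ is injective into $\Z^4$, so $\sum_\ell B(k+\ell)^2\le\|B^2\|_{\ell^1}$. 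The pseudo-covariance is in fact much smaller, since $k+\ell$ has norm at least $2K_0$, but this is not needed. Combining, $\Var_j(T_N)\le C L_N/L_N^2=C/L_N$. The constants depend only on $\chi$, $\nu$ and the $\alpha_j$, uniformly in $N$. The point to check is that the bounds $v_j\asymp f_j$ and the boundedness of $1+\delta_k$ hold uniformly over the shell; both follow from Lemma~\ref{lem:diagonal-asymptotics} once $K_0$ is large.

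For the almost sure statement, Chebyshev's inequality gives $\mathbf P_j(|T_{N_s}-\E_jT_{N_s}|>\varepsilon)\le C\varepsilon^{-2}L_{N_s}^{-1}$, which is summable in $s$. Borel--Cantelli then yields that, for each rational $\varepsilon>0$, eventually $|T_{N_s}-\E_jT_{N_s}|\le\varepsilon$ almost surely. Intersecting over countably many $\varepsilon$ gives almost sure convergence to $0$. By Lemma~\ref{lem:log-growth}, such sequences exist, for example $N_s=\lceil e^{s^2}\rceil$.
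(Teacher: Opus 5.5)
Your proposal is correct. It uses the same reduction as the paper: the variance as a double sum, Lemma~\ref{lem:complex-wick}, Lemma~\ref{lem:off-diagonal-bounds}, and uniform comparability of $v_j$ and $v_1$ on the shell. It also ends with the same Chebyshev--Borel--Cantelli argument. The only real difference is how the double sums are controlled.

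The paper bounds $I_{N,1}$ by Cauchy--Schwarz and Young's inequality. For $I_{N,2}$ it says no convolution argument is available. Instead it uses $k,\ell\in\N^4$ to get $\|k+\ell\|_\infty\ge\frac14(\|k\|_\infty+\|\ell\|_\infty)$, factors $B(k+\ell)^2$ into a product of one-variable weights, and concludes $I_{N,2}=O(1)$.

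Your Schur-type bound $|\delta_k\delta_\ell|\le\frac12(\delta_k^2+\delta_\ell^2)$ only needs the kernel's row and column sums to be bounded. That holds for $B(k+\ell)^2$ because $\ell\mapsto k+\ell$ is injective. So you treat both kernels the same way and get $I_{N,1}+I_{N,2}\le(\|A^2\|_{\ell^1}+\|B^2\|_{\ell^1})L_N$, with no use of the sign of the indices. Equivalently, the reflection $\ell\mapsto-\ell$ turns $I_{N,2}$ into a convolution, so the paper's remark that no convolution argument exists is overly cautious.

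Your route is shorter and uniform, and it would work for index sets not confined to the positive orthant. The paper's route gives stronger information: the pseudo-covariances contribute only $O(1)$, so they are negligible relative to $L_N$. The proposition does not need this.

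One inaccuracy in a remark you yourself flagged as unnecessary: $k+\ell$ has $\ell^1$-norm at least $2K_0$, but not necessarily $\ell^\infty$-norm. Take $k=(K_0,0,0,0)$ and $\ell=(0,K_0,0,0)$.
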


\begin{proof}
Since $T_N$ is centered under $\mathbf P_1$ and shifted by its mean under $\mathbf P_2$, the variance under either measure is
\[
\operatorname{Var}_j(T_N)
=\frac1{L_N^2}\sum_{k,\ell\in\Lambda_N}
\frac{\delta_k\delta_\ell}{v_1(k)v_1(\ell)}
\operatorname{Cov}_j\big(\abs{X_k}^2,\abs{X_\ell}^2\big).
\]
Taking absolute values and using Lemma~\ref{lem:complex-wick},
\[
\operatorname{Var}_j(T_N)
\le \frac1{L_N^2}\sum_{k,\ell\in\Lambda_N}
\frac{\abs{\delta_k\delta_\ell}}{v_1(k)v_1(\ell)}
\Big(\abs{\Sigma_j(k,\ell)}^2+\abs{\Pi_j(k,\ell)}^2\Big).
\]
By Lemma~\ref{lem:diagonal-asymptotics}, after increasing $K_0$ if necessary,
\[
\frac12\le \frac{v_2(k)}{v_1(k)}\le 2\qquad (k\in\Lambda_N),
\]
so $v_j(k)/v_1(k)$ is uniformly bounded above and below for $j=1,2$. 
Combining this with Lemma~\ref{lem:off-diagonal-bounds} yields
\begin{align}
\operatorname{Var}_j(T_N)
&\le \frac{C}{L_N^2}\sum_{k,\ell\in\Lambda_N}\abs{\delta_k\delta_\ell}
\Big(A(\ell-k)^2+B(k+\ell)^2\Big) \notag\\
&=: \frac{C}{L_N^2}\big(I_{N,1}+I_{N,2}\big),
\label{eq:var-split}
\end{align}
where
\begin{align*}
I_{N,1}
&:= \sum_{k,\ell\in\Lambda_N}\abs{\delta_k\delta_\ell}\,A(\ell-k)^2,\\
I_{N,2}
&:= \sum_{k,\ell\in\Lambda_N}\abs{\delta_k\delta_\ell}\,B(k+\ell)^2.
\end{align*}

For estimating $I_{N,1}$, define $a_q:=A(q)^2$ on $\Z^4$ and extend $\delta_k$ by zero outside $\Lambda_N$. Since $a\in\ell^1(\Z^4)$ by Lemma~\ref{lem:A-B-rapid},
\begin{align*}
\sum_{k,\ell\in\Lambda_N}\abs{\delta_k\delta_\ell}A(\ell-k)^2
&=\sum_{k\in\Z^4}\abs{\delta_k}(a*\abs{\delta})(k)
\\
&\le \|\delta\|_{\ell^2}\,\|a*\abs{\delta}\|_{\ell^2}
\\
&\le \|a\|_{\ell^1}\,\|\delta\|_{\ell^2}^2
=\|a\|_{\ell^1}L_N,
\end{align*}
where the second line is Cauchy--Schwarz, and the third line is Young's convolution inequality
\[
\|a*\abs{\delta}\|_{\ell^2(\mathbb Z^4)}
\le
\|a\|_{\ell^1(\mathbb Z^4)}\,\|\delta\|_{\ell^2(\mathbb Z^4)}.
\]

The estimate for \(I_{N,1}\) uses the convolution structure coming from the
difference variable \(\ell-k\). No analogous convolution argument is available
for \(I_{N,2}\), since \(B(k+\ell)^2\) depends on the sum rather than the
difference; instead we exploit the positivity of \(k,\ell\in\mathbb N^4\) to
separate the decay in \(k+\ell\) into a product of one-variable weights.
This is also one of the reasons for indexing the statistic by $\N^4$ rather than by all of $\Z^4$.

Fix $M>4$. By Lemma~\ref{lem:A-B-rapid},
\[
B(k+\ell)^2\le C_M(1+\norm{k+\ell}_\infty)^{-2M}.
\]
Since $k,\ell\in\N^4$,
\[
\norm{k+\ell}_\infty \ge \frac14\big(\norm{k}_\infty+\norm{\ell}_\infty\big),
\]
because $\norm{x}_1\le 4\norm{x}_\infty$ and $\norm{k+\ell}_1=\norm{k}_1+\norm{\ell}_1\ge \norm{k}_\infty+\norm{\ell}_\infty$. Therefore
\begin{align*}
(1+\norm{k+\ell}_\infty)^{-2M}
&\le C(1+\norm{k}_\infty+\norm{\ell}_\infty)^{-2M} = C\Big((1+\norm{k}_\infty+\norm{\ell}_\infty)^2\Big)^{-M} \\
&\leq C(1+\norm{k}_\infty)^{-M}(1+\norm{\ell}_\infty)^{-M}.
\end{align*}
Hence
\begin{equation} \label{eqn:in2}
\sum_{k,\ell\in\Lambda_N}\abs{\delta_k\delta_\ell}B(k+\ell)^2
\le C\left(\sum_{k\in\N^4}\abs{\delta_k}(1+\norm{k}_\infty)^{-M}\right)^2.
\end{equation}
By Lemma~\ref{lem:diagonal-asymptotics}, after enlarging \(K_0\) if necessary,
$
|\delta_k| \asymp |k|^{-2}, k\in\Lambda_N.
$
Since \(\|k\|_\infty \le |k| \le 2\|k\|_\infty\) in \(\mathbb R^4\), it follows that
\[
|\delta_k|\lesssim \|k\|_\infty^{-2}, \qquad k\in\Lambda_N.
\]
Thus the series on the right-hand side of \eqref{eqn:in2} converges provided $M>2$; in particular it is finite for our choice $M>4$. Thus we have shown that $I_{N,2}$ is $O(1)$.

Inserting these two bounds into \eqref{eq:var-split} gives
\[
\operatorname{Var}_j(T_N)\le \frac{C(L_N+1)}{L_N^2}\le \frac{C'}{L_N},
\]
which is \eqref{eq:variance-bound}.

Now let $N_s\to\infty$ with $\sum_s L_{N_s}^{-1}<\infty$. By Chebyshev's inequality and \eqref{eq:variance-bound}, for every $\varepsilon>0$,
\[
\sum_{s=1}^\infty \mathbf P_j\big(\abs{T_{N_s}-\E_j[T_{N_s}]}>\varepsilon\big)
\le \frac1{\varepsilon^2}\sum_{s=1}^\infty \operatorname{Var}_j(T_{N_s})<\infty.
\]
Borel-Cantelli implies $T_{N_s}-\E_j[T_{N_s}]\to 0$ almost surely under $\mathbf P_j$.
\end{proof}

\section{Proof of Theorem~\ref{thm:main}}

\begin{theorem}\label{thm:path-singularity}
Under \eqref{eq:micro-match}, the path laws $\mathbf P_1$ and $\mathbf P_2$ on $C(\overline D)$ are mutually singular.
\end{theorem}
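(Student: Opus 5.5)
The plan is to build an explicit separating Borel set in $C(\overline D)$ from the statistics $T_N$ of \eqref{eq:TN-def}. This combines Lemma~\ref{lem:means}, Proposition~\ref{prop:variance} and Lemma~\ref{lem:log-growth}.

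\emph{Step 1: measurability.} For each $k\in\N^4$, the map $g\mapsto X_k(g)=\int_D\chi(t)e^{-ik\cdot t}g(t)\,dt$ is a continuous linear functional on $C(\overline D)$ with the uniform norm, since $|X_k(g)|\le \|\chi\|_{L^1}\|g\|_\infty$. Hence each $T_N$, a finite combination of the $|X_k|^2$ with deterministic coefficients $\delta_k$, $v_1(k)$ and $L_N$, is a continuous and hence Borel function on $C(\overline D)$. The moments of $X_k$ under $\mathbf P_j$ are those computed in Lemma~\ref{lem:cov-formulas}, because $\mathbf P_j$ is the law of the continuous modification $\widetilde Y_j$.

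\emph{Step 2: choice of subsequence.} By Lemma~\ref{lem:log-growth} there is $c>0$ with $L_N\ge c\log N$ for large $N$. I take $N_s:=\lceil e^{s^2}\rceil$, so that $L_{N_s}\gtrsim s^2$ and $\sum_s L_{N_s}^{-1}<\infty$. Proposition~\ref{prop:variance} then gives $T_{N_s}-\E_j[T_{N_s}]\to0$ $\mathbf P_j$-almost surely for $j=1,2$. Lemma~\ref{lem:means} gives $\E_1[T_{N_s}]=0$ and $\E_2[T_{N_s}]=1$. Therefore $T_{N_s}\to0$ $\mathbf P_1$-a.s. and $T_{N_s}\to1$ $\mathbf P_2$-a.s.

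\emph{Step 3: separating event.} Define
\[
A:=\Bigl\{g\in C(\overline D):\lim_{s\to\infty}T_{N_s}(g)=0\Bigr\}.
\]
This set is Borel because it is the limit set of a sequence of Borel functions. Step 2 gives $\mathbf P_1(A)=1$. On the complement of a $\mathbf P_2$-null set, $T_{N_s}\to1\neq0$, so $\mathbf P_2(A)=0$. Hence $\mathbf P_1\perp\mathbf P_2$.

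Theorem~\ref{thm:main} then follows from Corollary~\ref{cor:dense-observations-from-paths}, using the fact that $D_\infty$ is a countable dense subset of $D$.

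The analytic difficulty, namely controlling the variance despite off-diagonal correlations, is already handled by Proposition~\ref{prop:variance}. The only remaining obstacle is to check that the slow $\log N$ growth still allows a summable subsequence, which the super-exponential choice $N_s=\lceil e^{s^2}\rceil$ provides. A minor point to verify is that Proposition~\ref{prop:variance} is applied under $\mathbf P_j$ on path space, which is legitimate because the second moments of the $X_k$ are the same for $Y_j$ and $\widetilde Y_j$.
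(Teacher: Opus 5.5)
Your proposal is correct and is essentially the paper's proof: the same subsequence $N_s=\lceil e^{s^2}\rceil$, the same combination of the logarithmic growth of $L_N$, the variance bound $\operatorname{Var}_j(T_N)\le C/L_N$ with Borel--Cantelli, and $\E_1[T_N]=0$, $\E_2[T_N]=1$, and the same separating event $A=\{\lim_s T_{N_s}=0\}$. Your explicit check that each $T_N$ is continuous on $C(\overline D)$, so that $A$ is Borel, is a small addition the paper leaves implicit.
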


\begin{proof}
Let $N_s:=\lceil e^{s^2}\rceil$. By Lemma~\ref{lem:log-growth}, $L_{N_s}\asymp s^2$, so
\[
\sum_{s=1}^\infty \frac1{L_{N_s}}<\infty.
\]
Proposition~\ref{prop:variance} and Lemma~\ref{lem:means} give
\[
T_{N_s}\to 0\quad \mathbf P_1\text{-almost surely},
\qquad
T_{N_s}\to 1\quad \mathbf P_2\text{-almost surely}.
\]
Therefore the event
\[
A:=\left\{f\in C(\overline D): \lim_{s \to\infty} T_{N_s}(f)=0\right\}
\]
satisfies $\mathbf P_1(A)=1$ and $\mathbf P_2(A)=0$. Hence $\mathbf P_1\perp \mathbf P_2$.
\end{proof}

\begin{proof}[Proof of Theorem~\ref{thm:main}]
Theorem~\ref{thm:path-singularity} gives $\mathbf P_1\perp \mathbf P_2$ on $C(\overline D)$. Corollary~\ref{cor:dense-observations-from-paths} then implies $\mathbf P_1^S\perp \mathbf P_2^S$ for every countable dense set $S\subset D$, in particular for $S=D_\infty$ and we are done.
\end{proof}

\section{Estimating parameters via Whittle Likelihood}
Although the main purpose of this paper is to establish mutual singularity in the critical dimension \(d=4\), the same localized Fourier coefficients also suggest a natural route to parameter estimation. 
To illustrate this, we present a simulation study in which the Mat\'ern parameters are estimated via a Whittle-type diagonal Gaussian pseudo-likelihood constructed from discrete Fourier coefficients of the observed field. 

Throughout, the smoothness \(\nu\) is assumed known. For each candidate \(\alpha\), we normalize to the microergodic scale \(m=1\), so that
\[
\sigma^2=\alpha^{-2\nu}.
\]
We simulate a Mat\'ern field \(Y\) on the fixed domain
$
D=[0,2\pi)^4$
and observe it on the regular lattice
\[
\mathcal G_{\mathrm{obs}}
=
\{x_j=h\,j:\ j\in\{0,\dots,n_{\mathrm{obs}}-1\}^4\},
\qquad
h=\frac{2\pi}{n_{\mathrm{obs}}},
\]
with total sample size \(N_{\mathrm{obs}}=n_{\mathrm{obs}}^4\). In this section we switch to exact-grid discrete Fourier coefficients on the observation lattice. These differ from the localized Fourier coefficients used in the proof.

Define the sampled-grid covariance
\[
c_\alpha(r)
=
\operatorname{Cov}\!\bigl(Y(0),Y(hr)\bigr)
=
C_{\alpha,\alpha^{-2\nu},\nu}(h\|r\|),
\qquad r\in\mathbb Z^4.
\]
Let
\[
\omega(j)=\mathbf{1}_{\{0,\dots,n_{\mathrm{obs}}-1\}^4}(j),
\qquad j\in\mathbb Z^4,
\]
and define
\[
A(r)
=
\sum_{j\in\mathbb Z^4}\omega(j)\omega(j+r)
=
\prod_{\ell=1}^4 \bigl(n_{\mathrm{obs}}-|r_\ell|\bigr)_+,
\qquad r\in\mathbb Z^4.
\]
The observed-grid Fourier coefficients are defined by
\[
Z_k
=
h^4\sum_{j\in\{0,\dots,n_{\mathrm{obs}}-1\}^4}
Y(hj)e^{-2\pi i \langle k,j\rangle/n_{\mathrm{obs}}},
\qquad
I_k=|Z_k|^2,
\]
and can be computed efficiently via FFT.
Because the field is real-valued, the Fourier coefficients satisfy
$
Z_{-k} = \overline{Z_k}$.
To avoid double counting, we retain one representative from each pair $\{k,-k\}$ and exclude the zero mode $k=0$. We denote the resulting set of frequencies by $\mathcal K \subset \{0,\dots,n_{\mathrm{obs}}-1\}^4 \setminus \{0\}$. The exact variance of \(Z_k\) under the normalization \(m=1\) is
\[
u_\alpha(k)
=
h^8
\sum_{r\in\mathbb Z^4}
c_\alpha(r)\,A(r)\,
e^{-2\pi i \langle k,r\rangle/n_{\mathrm{obs}}}.
\]
Under a general parameter pair \((m,\alpha)\), this becomes
$
\operatorname{Var}(Z_k)=m\,u_\alpha(k).
$
We then define the Whittle pseudo-likelihood
\[
Q_N(m,\alpha)
=
\sum_{k\in\mathcal K}
\left\{
\log\bigl(m\,u_\alpha(k)\bigr)
+
\frac{I_k}{m\,u_\alpha(k)}
\right\}.
\]
For fixed \(\alpha\), the minimizer in \(m\) is explicit:
\[
\widehat m(\alpha)
=
\frac{1}{|\mathcal K|}
\sum_{k\in\mathcal K}\frac{I_k}{u_\alpha(k)}.
\]
We therefore profile out \(m\) and estimate \(\alpha\) by
\[
\widehat\alpha
=
\arg\min_{\alpha\in[\alpha_{\min},\alpha_{\max}]}
Q_N\bigl(\widehat m(\alpha),\alpha\bigr),
\qquad
\widehat m=\widehat m(\widehat\alpha).
\]

In our implementation, the minimization over \(\alpha\) is carried out by a grid search over a fixed one-dimensional set of candidate values. In our experiment, we take
$
\nu=\frac32,
m=1, \alpha = 3.
$
Figure \ref{fig:whittle_alpha} shows that the Whittle likelihood estimator correctly recovers the parameter $\alpha$ even for moderate sample sizes. The concentration of \(\widehat{\alpha}\) around \(\alpha = 3\) reflects the fact that the Fourier coefficients retain sufficient information for parameter recovery, despite the presence of strong dependence across modes. It is an interesting open problem to show the consistency of the Whittle likelihood estimator for both $m$ and $\alpha$.
\begin{figure}[t]
\centering
\includegraphics[width=1\textwidth]{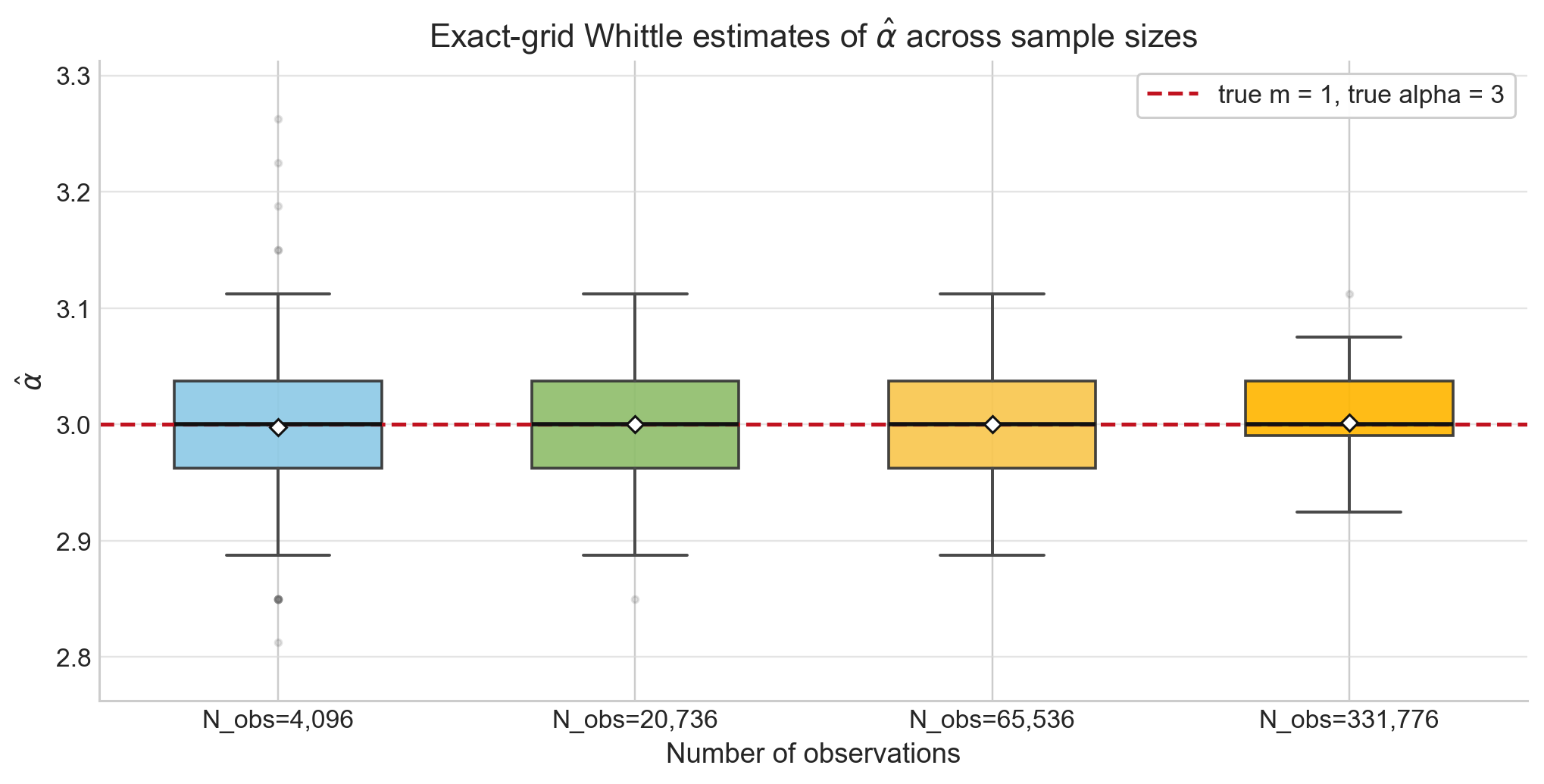}
\caption{
Monte Carlo distribution of the Whittle estimator \(\widehat{\alpha}\) based on the (exact) grid Fourier coefficients in dimension \(d=4\). 
The true parameter value \(\alpha = 3\) is indicated by the dashed line. 
}
\label{fig:whittle_alpha}
\end{figure}

\section*{Acknowledgements}
I thank Sudipto Banerjee, Debdeep Pati, and Aaron Smith for useful discussions, and David Bolin for helpful comments on how the paper \cite{BoliKirc2023} relates to our work. I acknowledge the use of AI (GPT 5.4) at every stage during the preparation of this paper.
\bibliographystyle{unsrtnat}
\bibliography{refs}

%
%

\end{document}